\newtheorem{theorem}{Theorem}[section]
\newtheorem{remark}{Remark}[section]
\newtheorem{corollary}[theorem]{Corollary}
\newtheorem{lemma}[theorem]{Lemma}
\newtheorem{definition}{Definition}[section]
\newcommand{\cH}{\mathcal{H}}
\newcommand{\cD}{\mathcal{D}}
\newcommand{\cC}{\mathcal{C}}
\newcommand{\cM}{\mathcal{M}}
\newcommand{\cV}{\mathcal{V}}
\newcommand{\cW}{\mathcal{W}}
\newcommand{\bbR}{\mathbb{R}}
\newcommand{\bbC}{\mathbb{C}}
\newcommand{\bbZ}{\mathbb{Z}}
\newcommand{\bbE}{\mathbb{E}}
\newcommand{\bbD}{\mathbb{D}}
\newcommand{\bbM}{\mathbb{M}}
\newcommand{\bbI}{\mathbb{I}}
\newcommand{\bbP}{\mathbb{P}}
\newcommand{\sB}{\mathscr{B}}
\renewcommand{\Re}[1]{\mathrm{Re}\{#1\}}
\renewcommand{\Im}[1]{\mathrm{Im}\{#1\}}
\newcommand{\iprod}[1]{\left\langle #1 \right\rangle}
\newcommand{\bra}[1]{\left\langle#1\right|}
\newcommand{\ket}[1]{\left|#1\right\rangle}
\newcommand{\kebr}[1]{\left|#1\right\rangle\left\langle #1 \right|}
\newcommand{\wto}{\hookrightarrow}
\DeclareMathOperator{\supp}{\mathrm{supp}}
\newcommand{\tr}[1]{\mathrm{Tr}\left\{#1 \right\}}
\DeclareMathOperator*{\argmin}{argmin}
\begin{document}

\begin{frontmatter}

\title{Semi-Global Approximate stabilization of an infinite dimensional quantum stochastic system}

 \author[label1,label2]{Ram Somaraju}
 \author[label1,label2]{Mazyar Mirrahimi}
\address[label1]{INRIA Rocquencourt, Domaine
de Voluceau, B.P. 105, 78153 Le Chesnay cedex, France,
(ram.somaraju, mazyar.mirrahimi)@inria.fr}
\address[label2]{Ram Somaraju and Mazyar Mirrahimi acknowledge support from ``Agence Nationale de la Recherche'' (ANR), Projet Jeunes Chercheurs EPOQ2 number ANR-09-JCJC-0070.} 
\author[label3]{Pierre Rouchon}
\address[label3]{P. Rouchon is with Mines ParisTech, Centre Automatique et Syst\'{e}mes, Math\'{e}matiques et Syst\'{e}mes, 60 Bd Saint Michel, 75272 Paris cedex 06,
France, pierre.rouchon@mines-paristech.fr}
\address[label4]{Pierre Rouchon acknowledges support from ANR (CQUID).}

\begin{abstract}
In this paper we study the semi-global (approximate) state feedback stabilization of an infinite dimensional quantum stochastic system towards a target state. A discrete-time Markov chain on an infinite-dimensional Hilbert space is used to model the dynamics of a quantum optical cavity. We can choose an (unbounded) strict Lyapunov function that is minimized at each time-step in order to prove (weak-$\ast$) convergence of probability measures to a final state that is concentrated on the target state with (a pre-specified) probability that may be made arbitrarily close to $1$. The feedback parameters and the Lyapunov function are chosen so that the stochastic flow that describes the Markov process may be shown to be tight (concentrated on a compact set with probability arbitrarily close to $1$). We then use Prohorov's theorem and properties of the Lyapunov function to prove the desired convergence result.
\end{abstract}

\begin{keyword}
Quantum control\sep Lyapunov stabilization \sep Stochastic stability \sep One-parameter semigroups 

\end{keyword}

\end{frontmatter}


\section{Introduction}
\label{sec:int} 
In this paper we consider the stabilization of a discrete-time Markov process (Equations~\eqref{eqn:rhohalf} and~\eqref{eqn:rhoone}) that is defined on the unit sphere on an infinite-dimensional Hilbert (Fock) space $\cH$ at a target state which is a specific unit vector in $\cH$ corresponding to a photon-number state. We consider a Lyapunov function based state-feedback controller that drives our quantum system to the target state with probability greater than some pre-specified probability $1-\epsilon$ for all $1 > \epsilon > 0$\footnote{The problem of output feedback control has been examined in the finite-dimensional context in~\cite{Mirrahimi2010,Dotsenko2009} using a quantum adaptation of the Kalman filter. We do not discuss the problem of estimating the state of the system and refer the reader to~\cite{Mirrahimi2010} for further details on designing a state estimator.}.

The specific physical system under consideration uses Quantum Non-Demolition (QND) measurements to detect and/or produce highly nonclassical states of light in trapped super-conducting cavities~\cite{Deleglise2008,Gleyzes2007,Guerlin2007} (see~\cite[Ch. 5]{Haroche2006} for a description of such quantum electro-dynamical  systems and~\cite{Brune1992} for detailed physical models with QND measures of light using atoms). In this paper we examine the feedback stabilization of such experimental setups near a pre-specified target photon number state. Such photon number states, with a precisely defined number of photons, are highly non-classical and have potential applications in quantum information and computation.

As the Hilbert space $\cH$ is infinite dimensional it is difficult to design feedback controllers to drive the system towards a target state  (because closed and bounded subsets of $\cH$ are not compact). In~\cite{Mirrahimi2010,Dotsenko2009} a controller was designed by approximating the underlying Hilbert space $\cH$ with a finite-dimensional Galerkin approximation $\cH_{N_{max}}$. Physically this approximation leads to an artificial bound $N_{max}$ on the maximum number of photons that may be inside the cavity. In this paper we wish to design a controller for the full Hilbert space $\cH$ without using the finite dimensional approximation. Simulations (see~\cite{Somaraju2011}) indicate that the controller in Theorem~\ref{the:mainRes} below performs better than the one designed using the finite-dimensional approximation in~\cite{Mirrahimi2010,Dotsenko2009}.

Controlling infinite dimensional quantum systems have previously been examined in the deterministic setting of partial differential equations, which do not involve quantum measurements. Various approaches have been used to overcome the non-compactness of closed and bounded sets. One approach consists of proving approximate convergence results which show convergence to a neighborhood of the target state for example in~\cite{Beauchard2009,Mirrahimi2009}. Alternatively, one examines weak convergence for example, in~\cite{Beauchard2010}. Other approaches such as using strict Lyapunov functions or strong convergence under restrictions on possible trajectories to compact sets have also been used in the context of infinite dimensional state-space for example in~\cite{Coron1998,Coron2007}.

The situation in our paper is different in the sense that the system under consideration is inherently stochastic due to quantum measurements. The system we consider may be described using a discrete time Markov process on the set of unit vectors in the state Hilbert space $\cH$ as explained in Subsection~\ref{sub:DTMC}. We use a strict Lyapunov function that restricts the system trajectories with high probability to compact sets as explained in Section~\ref{sec:mainRes}. We use the properties of weak-convergence of measures to show approximate convergence (i.e. with probability of convergence approaching one) of the discrete time Markov process towards the target state. 
\subsection{Outline}
The remainder of the paper is organised as follows: in the following Section~\ref{sec:defs} we introduce some notation and the system model of the discrete-time Markov process. We also recall some results concerning the (weak-$\ast$)-convergence of probability measures. In Section~\ref{sec:mainRes} we state the main result of our paper (Theorem~\ref{the:mainRes}) concerning the approximate semi-global stabilizability of the Markov process at our target state. We also provide a proof of the main result using several Lemmas. We then present our conclusions in the final Section.
\section{Definitions and System Description}\label{sec:defs}
We introduce some notation that will be used to describe the discrete-time Markov process that characterizes the system.
\subsection{Notation}
In this paper, we use Dirac's Bra-ket notation commonly used in Physics literature\footnote{See e.g.~\cite[Sec. 8.3]{Teschl2009} for more details of the quantum Harmonic oscillator model and notation used here.}. The system Hilbert space associated with the quantum cavity is a Fock space which we denote by $\cH$ with inner-product $\iprod{\cdot|\cdot}_\cH$ and norm $\|\cdot\|_\cH$. We drop the subscript, for ease of notation, if this causes no confusion. Let the set\footnote{$\bbZ$,$\bbZ^+$ and $\bbZ^+_0$ denote the set of integers, positive integers and non-negative integers, respectively.} 
\begin{equation*}
\{\ket{n}: n \in \bbZ^+_0\}
\end{equation*}
denote the canonical basis of the Fock space $\cH$. Physically, the state $\ket{n}$ represents a cavity state with precisely $n$ photons. 

Let $a$ and $a^\dagger$ be the annihilation and creation operators defined on domains $\cD(a) \subset \cH$ and $\cD(a^\dagger)\subset \cH$, respectively and $N = a^\dagger a$ be the number operator with domain $\cD(N)$. These unbounded operators satisfy the relations
\begin{equation}\label{eqn:aadagN}
a\ket{n} = \sqrt{n}\ket{n-1},\ \ \  a^\dagger\ket{n} = \sqrt{n+1}\ket{n+1},\ \ \  N\ket{n} = n\ket{n}
\end{equation}
for all $n\in \bbZ^+_0$. 

For all $x\in \cH$ and $\epsilon > 0$ denote by $B_\epsilon(x)$ the open ball in $\cH$ centered at $x$ and of radius $\epsilon$. Also denote by $\bar{B}_1 = \{x\in \cH:\|x\|=1\}$ the closed set of unit vectors in $\cH$ with the topology inherited from $\cH$. Let $B'_\epsilon(x)$ denote the set $\bar{B}_1\cap B_\epsilon(x)$ for all $x\in \bar{B}_1$ and $\epsilon > 0$. Let $\cC = (\cC(\bar{B}_1),\|\cdot\|_\infty)$ be the Banach space of continuous functions on $\bar{B}_1$ with the supremum norm $\|\cdot\|_\infty$. 

We denote by $\sB = \sB(\bar{B}_1)$ the Borel $\sigma$-algebra of $\bar{B}_1$ and by $\cM_1$ the set of all probability measures on the measure space $(\bar{B}_1,\sB)$. For all $\mu\in \cM_1$ and $\sB$-measurable functions $f$ defined on $\bar{B}_1$ we denote by 
\begin{equation*}
\bbE_\mu[f] = \int_{x\in \bar{B}_1} f(x) d\mu(x)
\end{equation*}
the expectation value of the function $f$ with respect to measure $\mu$. 

The \emph{support} of a probability measure $\mu$ is defined to be the set
\begin{equation*}
\supp(\mu) = \{x\in \bar{B}_1: \mu(V) > 0 \textrm{ for all open neighborhoods $V$ of $x$}\}.
\end{equation*}
\subsection{Topology on $\cM_1$}
In this paper we study the weak-$\ast$ convergence of probability measures. It can be shown (see e.g.~\cite{Merkle2000}) that the space $\cM_1$ is a subset of the unit ball in the (continuous linear) dual space $\cC^\ast$ of $\cC$ through the relation $\mu\in \cM_1 \mapsto \Lambda_\mu\in \cC^\ast$ where
\begin{equation*}
\Lambda_\mu(f) = \bbE_\mu [f]
\end{equation*} 
for all $f\in \cC$. When we refer to the convergence of a sequence of measures, we mean converges with respect to the weak-$\ast$ topology of $\cC^\ast$.

\begin{definition}\label{defn:convMeas} We say that a sequence of probability measure $\{\mu_n\}_{n=1}^\infty\subset \cM_1$ converges (weak-$\ast$) to a probability measure $\mu\in \cM_1$ if for all $f\in \cC$
\begin{equation*}
\lim_{n\to\infty} \bbE_{\mu_n}[f] = \bbE_\mu[f]
\end{equation*}
and we write
\begin{equation*}
\mu_n\wto \mu.
\end{equation*}
\end{definition}

In the weak-$\ast$ topology on the set of probability measures, compactness is related to the notion of tightness of measures. A set of probability measures $S\subset \cM_1$ is said to be \emph{tight}~\cite[p. 9]{Billingsley1999} if for all $\epsilon > 0$ there exists a compact set $K_\epsilon \subset \bar{B}_1$ such that for all $\mu\in S$,
\begin{equation*}
\mu[\bar{B}_1\setminus K_\epsilon] < \epsilon.
\end{equation*}
We recall below Prohorov's theorem (see e.g.~\cite{Merkle2000}).
\begin{theorem}[Prohorov's theorem]\label{the:prohorov}
Any tight sequence of probability measures has a (weak-$\ast$) converging subsequence.
\end{theorem}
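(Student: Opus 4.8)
The statement is the ``relative-compactness'' (easy) direction of Prohorov's theorem, specialised to the separable metric space $\bar{B}_1$ (which is separable because $\cH$ carries the countable basis $\{\ket{n}\}$). The plan is to reduce it to the sequential Banach--Alaoglu theorem. One cannot apply that theorem directly to the unit ball of $\cC^\ast$, since $\cC = C_b(\bar{B}_1)$ is not separable ($\bar{B}_1$ being non-compact) and so the weak-$\ast$ topology on that ball is not metrizable; tightness is precisely the hypothesis that lets one localise the argument to compact pieces, on each of which the relevant space of continuous functions \emph{is} separable.

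First I would use tightness to fix a convenient exhaustion: for each $j\in\bbZ^+$ choose a compact $K_j\subset\bar{B}_1$ with $\mu_n(\bar{B}_1\setminus K_j)<1/j$ for \emph{all} $n$, and replace $K_j$ by $\bigcup_{i\le j}K_i$ so that $K_1\subseteq K_2\subseteq\cdots$; then $\mu_n(\bar{B}_1\setminus\bigcup_jK_j)=0$ for every $n$, so all the $\mu_n$ are concentrated on $\hat{K}:=\bigcup_jK_j$. Since $\bar{B}_1$ is separable metric it embeds homeomorphically into the Hilbert cube $Q=[0,1]^{\mathbb{N}}$; identifying $\bar{B}_1$ with its image, push each $\mu_n$ forward to a Borel probability measure $\bar\mu_n$ on $Q$, concentrated on the $\sigma$-compact (hence Borel) set $\hat{K}$ and with each $K_j$ still compact in $Q$. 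Since $C(Q)$ is separable, the closed unit ball of $C(Q)^\ast$ is weak-$\ast$ compact and metrizable, hence sequentially compact, so some subsequence obeys $\bar\mu_{n_k}\to\bar\mu$ weak-$\ast$; evaluating at the constant function $1$ and at nonnegative functions shows, via the Riesz representation theorem, that $\bar\mu$ is again a Borel probability measure on $Q$.

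Next I would invoke tightness a second time, through the portmanteau inequality $\bar\mu(F)\ge\limsup_k\bar\mu_{n_k}(F)$ for closed $F$, applied with $F=K_j$: since $\bar\mu_{n_k}(K_j)\ge 1-1/j$ for every $k$, this gives $\bar\mu(K_j)\ge 1-1/j$, whence $\bar\mu(\hat{K})=\lim_j\bar\mu(K_j)=1$. Because $\hat{K}\subseteq\bar{B}_1$, the measure $\bar\mu$ restricts to a genuine $\mu\in\cM_1$ on $\bar{B}_1$. It then remains to upgrade weak-$\ast$ convergence from test functions in $C(Q)$ to an arbitrary $f\in\cC$: extend $f|_{K_j}$ by the Tietze theorem to $\tilde{f}_j\in C(Q)$ with $\|\tilde{f}_j\|_\infty\le\|f\|_\infty$; the tail estimates $\mu_n(\bar{B}_1\setminus K_j)<1/j$ and $\mu(\bar{B}_1\setminus K_j)\le 1/j$ give $|\bbE_{\mu_{n_k}}[f]-\bar\mu_{n_k}(\tilde{f}_j)|\le 2\|f\|_\infty/j$ and $|\bbE_\mu[f]-\bar\mu(\tilde{f}_j)|\le 2\|f\|_\infty/j$, while $\bar\mu_{n_k}(\tilde{f}_j)\to\bar\mu(\tilde{f}_j)$ for each fixed $j$; choosing $j$ large and then letting $k\to\infty$ yields $\bbE_{\mu_{n_k}}[f]\to\bbE_\mu[f]$, i.e. $\mu_{n_k}\wto\mu$.

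I expect the only genuine content to be the identity $\bar\mu(\hat{K})=1$ --- that no mass escapes in the limit, so that the limit is a probability measure and not merely a sub-probability measure --- since this is the single place where the tightness hypothesis is truly indispensable, whereas the Hilbert-cube reduction and the Tietze bookkeeping are routine. A self-contained alternative that avoids the embedding is to diagonalise directly: extract nested subsequences along which $\mu_n|_{K_j}$ converges weak-$\ast$ in $C(K_j)^\ast$ to some positive measure $\nu_j$ for every $j$, pass to the diagonal subsequence, and glue the $\nu_j$ into $\mu$ using the almost-consistency bound $|\int_{K_j}g\,d\nu_j-\int_{K_i}g\,d\nu_i|\le\|g\|_\infty/i$ for $i<j$ and $g\in C(K_j)$ --- at the price of a somewhat more delicate gluing step. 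Since the statement is quoted with a reference, one may of course also simply defer to \cite{Merkle2000}.
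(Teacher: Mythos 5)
The paper does not prove this statement at all: Prohorov's theorem is simply recalled from the literature (cited to Merkle), so there is no internal proof to compare yours against. Judged on its own, your sketch is a correct rendition of the standard textbook proof of the sufficiency half of Prohorov's theorem (as in Billingsley/Varadarajan): since $\bar{B}_1$ is separable metric, embed it in the Hilbert cube $Q$; use separability of $C(Q)$ to get weak-$\ast$ sequential compactness of the unit ball of $C(Q)^\ast$ and the Riesz representation to identify the limit as a Borel probability measure on $Q$; use the portmanteau inequality on the nested compacts $K_j$ to show no mass escapes, so the limit lives on $\hat{K}=\bigcup_j K_j\subseteq\bar{B}_1$; and use Tietze extensions plus the uniform tail bounds to upgrade convergence to arbitrary $f\in\cC$. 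You also correctly identify and handle the two genuine pressure points: (i) one cannot quote Banach--Alaoglu directly on $\cC^\ast$ (non-metrizable ball, and a weak-$\ast$ limit there could be only finitely additive or lose mass), and (ii) the image of $\bar{B}_1$ in $Q$ need not be conveniently measurable, which you sidestep by concentrating all measures on the $\sigma$-compact set $\hat{K}$, whose Borel subsets are Borel in $Q$. The diagonal-extraction alternative you mention is likewise standard and sound. In short: correct, complete in outline, and entirely compatible with the paper's use of the theorem, which in any case only needs the statement as quoted.
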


\subsection{Discrete-time Markov process}\label{sub:DTMC}
We now describe the evolution of our quantum system which is governed by a Markov process on the state space $\bar{B}_1$. We introduce below the Markov process model with minimal references to the actual physical system under consideration. We refer the interested reader to~\cite[Ch. 5]{Haroche2006} and references therein for a description of the physical system and the approximations involved in deriving the Markov process model (also see~\cite{Mirrahimi2010,Dotsenko2009}). 

Define the displacement operator $D_\alpha:\cH\to\cH$ and measurement operators $M_s:\cH\to \cH$ where $\alpha\in \bbR$ and $s\in \{g,e\}$ as
\begin{equation*}
D_\alpha = \exp\{\alpha (a^\dagger - a)\},\ M_g = \cos(\theta + N\phi)\textrm{ and } M_e = \sin(\theta + N\phi).
\end{equation*}
Here, $\theta$ and $\phi$ are experimentally determined real numbers and the operators $a$, $a^\dagger$ and $N$ are defined in Equation~\eqref{eqn:aadagN}. Recall that because the operator $i(a^\dagger - a)$ is self-adjoint,  we may conclude from Stone's theorem that the set of operators $\{D_\alpha:\alpha\in \bbR\}$ form a strongly-continuous unitary group (see e.g.~\cite[Sec V111.4]{Reed1980}), i.e.
\begin{equation}\label{eqn:stCont}
\lim_{\alpha\to\alpha_0} \|D_\alpha\ket{\psi} - D_{\alpha_0}\ket{\psi}\|_\cH = 0,\ \forall \psi\in \cH.
\end{equation}

Denote by $\ket{\psi_k}\in \bar{B}_1$ the state of the system at time-step $k$. Given the state $\ket{\psi_k}$ at time-step $k$ the state $\ket{\psi_{k+1}}$ at time-step $k+1$ is a random variable whose distribution is given by the following two-step equation
\begin{eqnarray}
\ket{\psi_{k+1/2}} &=&\frac{ M_s \ket{\psi_k}}{\|M_s\ket{\psi_k}\|_\cH} \textrm{ with probability } P_s = \|M_s\ket{\psi_k}\|_\cH^2,\label{eqn:rhohalf}\\
\ket{\psi_{k+1}} &=& D_{\alpha_k} \ket{\psi_{k+1/2}}\label{eqn:rhoone}.
\end{eqnarray}
Here $s\in \{e,g\}$ and the control $\alpha_k\in \bbR$.
\begin{remark}
The time evolution from the step $k$ to $k + 1$, consists of two types of evolutions: a projective measurement by the operators  $M_s$ and a control part involving operator $D_\alpha$. For the sake of simplicity, we will use the notation of $\ket{\psi_{k+1/2}}$ to illustrate this intermediate step.
\end{remark}

The Markov jump probabilities may also be written in terms of density operators. Given any $\ket{\psi}$ denote by $\rho_\psi$ the density operator $\ket{\psi}\bra{\psi}$. Then
\begin{equation*}
|\iprod{\psi|n}|^2 = \tr{\rho_\psi\kebr{n}}\textrm{ and }  \|M_g\ket{\psi}\|^2 = \tr{M_g^2\rho_\psi}.
\end{equation*}
Here $\tr{\cdot}$ is the trace of a trace-class operator on $\cH$. If we set $\rho_k  = \kebr{\psi_k}$ then we can write the Markov jump probabilities~\eqref{eqn:rhohalf}, \eqref{eqn:rhoone} using the equivalent density operator description.
\begin{eqnarray*}
\rho_{k+1/2} &=&\frac{\bbM_s(\rho_k)}{\tr{\bbM_s(\rho_k)}} \textrm{ with probability } P_s =\tr{\bbM_s(\rho_k)}\\
\rho_k &=&\bbD_{\alpha_k}(\rho_k).
\end{eqnarray*}
Here, $\bbD_\alpha(\cdot) \triangleq D_{\alpha}\cdot D_{-\alpha}$ and $\bbM_s(\cdot) = M_s\cdot M_s$ are super-operators. We switch between the two equivalent descriptions throughout the paper depending on convenience. 

\begin{remark}
Equations~\eqref{eqn:rhohalf} and~\eqref{eqn:rhoone} determine a stochastic flow in $\cM_1$ and we denote by $\Gamma_k(\mu_0)$ the probability distribution of $\ket{\psi_k}$, given $\mu_0$, the probability distribution of $\ket{\psi_0}$. 
\end{remark}
\subsection{Some useful formulas}
We recall below some useful results. The Baker-Campbell-Hausdorff formula, which will be used to evaluate the derivatives of our Lyapunov function, states (see e.g.~\cite[p. 291]{Nielsen2000})
\begin{equation}\label{eqn:bch}
\exp(\alpha H) A \exp(-\alpha H) = \sum_{n=0}^\infty \frac{\alpha^n}{n!} C_n,
\end{equation}
where $A,H$ and $C_n$ are linear operators on $\cH$ and $\alpha\in \bbC$. The $C_n$ are defined recursively with $C_0 = A$ and $C_{n+1} = [H,C_n]$ for $n\geq 0$. 

Let $X_n$ be a Markov process on some state space X. Suppose that there is a non-negative function $V$ on $X$ satisfying $\bbE[V(X_1)|X_0 = x)]- V (x) \leq 0$, then Doob's inequality states
\begin{equation}\label{eqn:doob}
\bbP\left(\sup_{n\geq 0}V(X_n) \geq \gamma|X_0 = x\right) \leq \frac{V(x)}{\gamma}.
\end{equation}
\section{Main Results}\label{sec:mainRes}
We prove the main results of our paper in this Section. We wish to use the control $\alpha_k$ to drive the system into a pre-specified target state $\ket{\bar{n}}$ where $\bar{n}\in \bbZ^+_0$. 

We use a strict Lyapunov function $V:\bar{B}_1\to [0,\infty]$ defined\footnote{We choose this Lyapunov function assuming $\bar{n} \geq 2$. The Lyapunov function may easily be modified for the case $\bar{n} = 0,1$ and all the proofs in this paper may be applied to that case as well.}
\begin{equation}\label{eqn:lyap}
V(\ket{\psi}) = \sum_{n=0}^\infty \sigma_n\left|\iprod{\psi|n}\right|^2 + \delta\left(\cos^4(\phi_{\bar{n}}) + \sin^4(\phi_{\bar{n}})   - \left\|M_g\ket{\psi} \right\|^4 - \left\|M_e \ket{\psi}\right\|^4\right).
\end{equation}
Here 
\begin{equation*}
\phi_{n} = \theta + n\phi = \cos^{-1}\left(\|M_g\ket{n}\|\right) = \sin^{-1}\left(\|M_e\ket{n}\|\right),\ n = 0,1,2,\ldots.
\end{equation*}
$\delta > 0$ is a small positive number and
\begin{equation}\label{eqn:sigmaDef}
\sigma_n = \left\{\begin{array}{ll} 
\frac{1}{8} +\sum_{k=1}^{\bar{n}} \frac{1}{k} - \frac{1}{k^2},& \textrm{ if $n=0$}\\
\sum_{k=n+1}^{\bar{n}} \frac{1}{k} - \frac{1}{k^2},& \textrm{ if $1 \leq n < \bar{n}$}\\
0, &  \textrm{ if $n = \bar{n}$ }\\
\sum_{k=\bar{n}+1}^n \frac{1}{k} + \frac{1}{k^2},& \textrm{ if $n > \bar{n}$}
\end{array}\right.
\end{equation}
We set $\cD(V)\subset \bar{B}_1$ to be the set of all $\ket{\psi}\in \bar{B}_1$ where the above Lyapunov function  is finite. 
\begin{remark}
We note that coherent states $\ket{\xi_\alpha} = \exp(-|\alpha|^2/2)\sum_{n = 0}^\infty \frac{\alpha^n}{n!}\ket{n}$, for $\alpha\in \bbC$ are in $\cD(V)$. These coherent states naturally occur in optical cavities and are generally the initial condition $\ket{\psi_0}$ is a coherent state in experiments.
\end{remark}
We choose a feedback that maximises the expectation value of the Lyapunov function in every time-step $k$ - 
\begin{equation}\label{eqn:control}
\alpha_k = \argmin_{\alpha\in [-\bar{\alpha},\bar{\alpha}]} V\left(D_\alpha\ket{\psi_{k+1/2}}\right)
\end{equation}
for some positive constant $\bar{\alpha}$. 

\begin{remark} The Lyapunov function and feedback $\alpha_k$ are chosen  to be this specific form to serve three purposes - 
\begin{enumerate}
\item We choose the sequence $\sigma_n \to \infty$ as $n\to \infty$. This guarantees that if we choose $\alpha_k$ to minimize the  expectation value of the Lyapunov function then the trajectories of the Markov process are restricted to a compact set in $\bar{B}_1$ with probability arbitrarily close to 1. This implies that the $\omega$-limit set of the process is non-empty  (see Lemma~\ref{lem:convSub}).
\item The term $-\delta(\|M_g\ket{\psi}\|^4 +\|M_g\ket{\psi}\|^4)$ is chosen such that the Lyapunov function is a strict Lyapunov functions for the Fock states. This implies that the support of the $\omega$-limit set only contains Fock states (see Lemma~\ref{lem:supp}).
\item The relative magnitudes of the coefficients $\sigma_n$ have been chosen such that $V(\ket{\bar{n}})$ is a strict global minimum of $V$. Moreover given any $M > \bar{n}$ we can choose $\delta,\bar{\alpha}$ such that for all $M \geq m\neq \bar{n}$, and for all $\ket{\psi}$ in some neighborhood of $\ket{m}$, $V(D_\alpha(\ket{\psi})$ does not have a local minimum at $\alpha = 0$. This implies that if $\ket{\psi_k}$ is in this neighborhood of $\ket{m}$ then we can choose an $\alpha_k\in [-\bar{\alpha},\bar{\alpha}]$ to decrease the Lyapunov function and move $\ket{\psi_k}$ away from $\ket{m}$ by some finite distance with probability that can be made arbitrarily close to 1 by an appropriate choice of $\bar{\alpha}$ and $\delta$ (see proof of Lemma~\ref{lem:probFock}).
\end{enumerate}
\end{remark}

We make the following assumption\footnote{See Remark~\ref{rem:weakAss} below, to see how we may weaken this assumption.}.

\begin{enumerate}
\item[A1] The eigenvalues of $M_g$ and $M_e$ are non-degenerate. This is equivalent to the assumption that $\pi/\phi$ is not a rational number. This implies that the only eigenvectors of $M_g$ and $M_e$ are the Fock states $\{\ket{n}:n\in \bbZ_0^+\}$.
\end{enumerate}
The following Theorem is our main result.
\begin{theorem}\label{the:mainRes} Suppose assumption A1 is true. For any initial measure $\mu$, let $\Gamma_k(\mu) = \Gamma^{\bar{\alpha},\delta}_k(\mu)$ be the Markov flow induced by Equations~\eqref{eqn:rhohalf},~\eqref{eqn:rhoone} and control $\alpha_k$ given in Equation~\eqref{eqn:control} with Lyapunov function $V$ in Equation~\eqref{eqn:lyap}. Here, $\bar{\alpha}$ determines the control signal and $\delta$ determines the Lyapunov function.

Given any $\epsilon > 0$ and $C > 0$, there exist constants $\delta >0$ and $\bar{\alpha}$ such that for all $\mu$ satisfying $\bbE_\mu[V] \leq C$, $\Gamma_k(\mu)$ converges (weak-$\ast)$ to a limit set $\Omega$. Moreover for all $\mu_\infty\in \Omega$, $\ket{\psi} \in \supp(\mu_\infty)$ only if $\ket{\psi}$ is one of the Fock states $\ket{n}$ and
\begin{equation*}
\mu_\infty(\{\bar{n}\}) \geq 1 - \epsilon.
\end{equation*}
\end{theorem}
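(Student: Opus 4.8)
The plan is to combine a tightness/compactness argument with a LaSalle-type invariance principle adapted to the weak-$\ast$ topology, organized around the supermartingale structure of $V$ under the feedback $\alpha_k$. First I would establish that $V$ is a genuine supermartingale along the controlled process: using the Baker–Campbell–Hausdorff formula~\eqref{eqn:bch} one computes the second-order expansion of $\alpha\mapsto V(D_\alpha\ket{\psi_{k+1/2}})$ near $\alpha=0$, and one checks that the measurement step~\eqref{eqn:rhohalf} leaves the $\sigma_n$-part of $V$ unchanged in conditional expectation (because $|\iprod{\psi|n}|^2$ is a martingale under the QND measurement) while the $-\delta(\|M_g\ket{\psi}\|^4+\|M_e\ket{\psi}\|^4)$ term strictly decreases in expectation away from Fock states. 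Choosing $\alpha_k$ as the minimizer in~\eqref{eqn:control} then gives $\bbE[V(\ket{\psi_{k+1}})\mid \ket{\psi_k}] \le V(\ket{\psi_k})$, so $\bbE_{\Gamma_k(\mu)}[V]\le C$ for all $k$, and Doob's inequality~\eqref{eqn:doob} bounds $\bbP(\sup_k V(\ket{\psi_k})\ge \gamma)\le C/\gamma$.

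Next I would use this uniform bound to prove \emph{tightness} of the family $\{\Gamma_k(\mu)\}_k$: since $\sigma_n\to\infty$, the sublevel sets $\{V\le\gamma\}$ are contained (up to the bounded $\delta$-perturbation) in sets of the form $\{\ket{\psi}:\sum_n \sigma_n|\iprod{\psi|n}|^2\le\gamma'\}$, which are compact in $\bar B_1$ because the tail weights force concentration on finitely many Fock modes together with the closed unit sphere condition. Hence for any $\epsilon>0$ there is a compact $K_\epsilon$ with $\Gamma_k(\mu)[\bar B_1\setminus K_\epsilon]<\epsilon$ for all $k$; this is presumably Lemma~\ref{lem:convSub}. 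By Prohorov's theorem (Theorem~\ref{the:prohorov}) every subsequence of $\{\Gamma_k(\mu)\}$ has a weak-$\ast$ convergent sub-subsequence, so the $\omega$-limit set $\Omega$ is nonempty; I would then show $\Omega$ is invariant under the transition kernel and that convergence of the whole sequence $\Gamma_k(\mu)\wto\Omega$ follows from the supermartingale property of $\bbE[V]$ together with lower semicontinuity of $V$ (the limit of $\bbE_{\Gamma_k}[V]$ exists and pins down the limit measures).

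Then I would identify the support of the limit measures. Using the strict decrease of the $\delta$-term off the Fock states (Lemma~\ref{lem:supp}), any $\mu_\infty\in\Omega$ must be supported on the set where that term contributes no further expected decrease, i.e. on eigenvectors of $M_g$ and $M_e$; by assumption A1 these are exactly the Fock states $\{\ket n\}$. Finally, to get the quantitative bound $\mu_\infty(\{\bar n\})\ge 1-\epsilon$ I would invoke the mechanism of the third remark (Lemma~\ref{lem:probFock}): for each $m\ne\bar n$ with $m\le M$, a suitable choice of $\bar\alpha$ and $\delta$ makes $\alpha=0$ fail to be a local minimum of $V(D_\alpha\ket\psi)$ near $\ket m$, so the feedback pushes mass away from $\ket m$ with probability close to $1$; combined with the a priori bound $C/\gamma$ on the mass that can escape to high photon numbers $m>M$ (choosing $M$ large relative to $C$ and $\epsilon$), the only Fock state that can retain positive limiting mass beyond the allowed $\epsilon$ is $\ket{\bar n}$.

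I expect the main obstacle to be the last step — making the ``repelling from $\ket m$'' argument uniform and genuinely global. One must show that the finitely many exceptional Fock states $\ket 0,\dots,\ket M$ (other than $\ket{\bar n}$) can be simultaneously destabilized by a single pair $(\bar\alpha,\delta)$, control the size of the neighborhoods on which the destabilization estimate holds, and carefully track how the escaping probability at each such state and the $C/\gamma$ tail bound combine to yield a clean $1-\epsilon$ without circular dependence between the constants; the delicate point is that $\bar\alpha$ must be large enough for the control to move the state a fixed distance, yet the second-order BCH expansion used to certify the supermartingale inequality is only reliable for small $\alpha$, so the two requirements must be reconciled, presumably by exploiting the boundedness of the trigonometric operators $M_g,M_e$ and the explicit structure of $V$ rather than a naive Taylor bound.
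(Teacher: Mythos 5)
Your outline reproduces the paper's architecture: supermartingale property of $V$ under the minimizing feedback, tightness of $\{\Gamma_k(\mu)\}$ from compactness of sublevel sets of $\sum_n\sigma_n|\iprod{\psi|n}|^2$ together with Doob's inequality, Prohorov's theorem to get a nonempty limit set, identification of $\supp(\mu_\infty)$ with Fock states via the strictly negative measurement term and assumption A1, and a final split of the Fock states into the tail $m\geq M$ (killed by Doob once $\sigma_M$ is large compared with $C/\epsilon$) and the finitely many $m<M$, $m\neq\bar n$ (killed by the repulsion lemma with $\kappa=\epsilon/2M$). The uniformity you worry about is not an issue: only finitely many exceptional states $m<M$ occur, so one takes the smallest of the finitely many admissible pairs $(\bar\alpha,\delta)$.

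The genuine gap is precisely the step you flag and leave open, and your proposed way of reconciling it points in the wrong direction. The supermartingale certification never uses the BCH expansion; it only uses that $0\in[-\bar\alpha,\bar\alpha]$ (so the minimizing $\alpha_k$ can only improve on $\alpha=0$) plus the exact computation that the measurement step preserves the expectation of the $\sigma_n$-part and strictly decreases the expectation of the $\delta$-part off the Fock states. Hence there is no need for $\bar\alpha$ to be ``large enough to move the state a fixed distance'': $\bar\alpha$ stays small, and the escape from a neighborhood of $\ket m$ is achieved over several steps rather than one. Concretely, Lemma~\ref{lem:derBound} gives $f_2(\ket\psi)\leq\gamma_m<0$ uniformly on $B'_\epsilon(\ket m)\cap\{V\leq C\}$, so for $\bar\alpha,\delta$ small one of $\alpha=\pm\bar\alpha$ decreases $V$ by a fixed $c=O(\bar\alpha^2)>0$ with probability one while the state remains in that set (Corollary~\ref{cor:minStep}); since $V$ is bounded there, after a bounded number of steps $V$ drops below $V(\ket m)-\epsilon$ and the state exits. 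What then keeps the limiting mass near $\ket m$ small is a second supermartingale/Doob estimate: once $V(\ket{\psi_p})<V(\ket m)-\epsilon$, the probability of ever climbing back above $V(\ket m)-\epsilon/2$ is bounded away from one, so infinitely many re-entries have probability zero, giving $\lim_p[\Gamma_p(\mu)](\bar{\cV}_\nu)=0$, and the portmanteau inequality $\mu_\infty(\cV_\nu)\leq\liminf_q[\Gamma_{p_q}(\mu)](\cV_\nu)$ for the open set $\cV_\nu$ yields $\mu_\infty(\cV_\nu)\leq\kappa$. This ``finitely many guaranteed decrements, then no infinite returns'' mechanism (Lemma~\ref{lem:probFock}) is absent from your plan and is what closes the argument. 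Finally, note the paper does not prove convergence of the whole sequence to a single measure; the theorem only requires that every weak-$\ast$ limit point satisfy the support and mass conditions, so your claim that the limit of $\bbE_{\Gamma_k}[V]$ ``pins down'' the limit measures is neither established nor needed.
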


\subsection{Overview of Proof} 
The proof of the Theorem uses several Lemmas that are proven in the next two subsections. We outline the central idea of the proof now. 

The Lyapunov function is such that choosing $\alpha_k = 0$ ensures that the expectation value of the Lyapunov function is non-increasing. Therefore by choosing $\alpha_k$ to be the $\alpha\in [-\bar{\alpha},\bar{\alpha}]$, that minimizes the expectation value of the Lyapunov function in each step, we ensure that $V(\ket{\psi_k})$ is a super-martingale (Lemma~\ref{lem:supMart}). 

The set $\{\ket{\psi}\in \bar{B}_1:\sum_{n} \sigma_n|\iprod{\psi|n}|^2\leq C\}$ is a compact subset of $\bar{B}_1$ (Lemma~\ref{lem:comp}). We use this fact and the super-martingale property of $V(\ket{\psi_k})$ to show that with probability approaching 1, $\ket{\psi_k}$ is in a compact set for all $k$. This implies the tightness of the sequence $\Gamma_n(\mu)$. Hence, we can use Prohorov's Theorem~\ref{the:prohorov} to prove the existence of a converging subsequence (Lemma~\ref{lem:convSub}).

Suppose $\Gamma_{k_l}(\mu)$ is some subsequence that converges to a measure $\mu_\infty$. We show that the second term in the Lyapunov function $V_2(\ket{\psi}) \triangleq \delta(\cos^4(\theta_{\bar{n}})+ \sin^4(\theta_{\bar{n}})-\|M_g\ket{\psi}\|^4 - \|M_e\ket{\psi}\|^4)$ is a strict Lyapunov function for Fock states. That is $\bbE[V_2(\ket{\psi_{k+1}})|\ket{\psi_k}] - V_2(\ket{\psi_k}) \leq 0$ with equality if and only if $\ket{\psi_k}$ is a Fock state. This implies that the support set of all $\mu_\infty$ only consists of Fock states (Lemma~\ref{lem:supp}). 

Finally we note that the first part of the Lyapunov function $V_1(\ket{\psi}) = \sum_n\sigma_n|\iprod{\psi|n}|^2$ satisfies $\nabla_\alpha V_1(\ket{m}) = 0$ for all $m$. Moreover $\nabla^2_\alpha V_1(D_\alpha\ket{m}) < 0$ for $m \neq \bar{n}$ and $\nabla^2_\alpha V_1(D_\alpha\ket{\bar{n}}) > 0$. This implies $\ket{m}, m \neq \bar{n}$ is a local maximum for the Lyapunov function. Therefore, we can find a feedback $\alpha$ to move the system away from Fock states $\ket{m}$ if $m \neq \bar{n}$ with high probability (Lemma~\ref{lem:probFock}). This therefore implies that we converge to our target Fock state with high probability.

We prove in the following two sections the convergence result rigorously. We first establish some properties of the Lyapunov function.
\subsection{Properties of the Lyapunov function}
\begin{lemma}
For $\delta$ small enough $V(\ket{\psi})$ is non-negative on $\bar{B}_1$.
\end{lemma}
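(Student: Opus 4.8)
The plan is to write $V=V_1+\delta V_2$, where $V_1(\ket{\psi})=\sum_{n\ge0}\sigma_n|\iprod{\psi|n}|^2$ and $V_2(\ket{\psi})$ is the bracketed expression $\cos^4(\phi_{\bar{n}})+\sin^4(\phi_{\bar{n}})-\|M_g\ket{\psi}\|^4-\|M_e\ket{\psi}\|^4$, and to bound $-V_2$ by a multiple of $V_1$. Every $\sigma_n$ is non-negative, directly from \eqref{eqn:sigmaDef}: for $n\le\bar{n}$ it is a (finite) sum of terms $\tfrac1k-\tfrac1{k^2}\ge0$, possibly plus $\tfrac18$; it vanishes at $n=\bar{n}$; and for $n>\bar{n}$ it is a sum of positive terms. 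Hence $V_1\ge0$ on $\bar{B}_1$, and if $V_1(\ket{\psi})=+\infty$ there is nothing to prove, so I assume $V_1(\ket{\psi})<\infty$. I would also record that $(\sigma_n)_n$ is strictly decreasing on $0\le n\le\bar{n}$ and strictly increasing on $n\ge\bar{n}$, so that $\sigma_{\min}:=\inf_{n\ne\bar{n}}\sigma_n=\min(\sigma_{\bar{n}-1},\sigma_{\bar{n}+1})>0$.

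Next I would rewrite $V_2$ as a function of a single real variable. Since $M_g^2+M_e^2=\cos^2(\theta+N\phi)+\sin^2(\theta+N\phi)=I$, on $\bar{B}_1$ one has $\|M_g\ket{\psi}\|^2+\|M_e\ket{\psi}\|^2=1$. Setting $c:=\|M_g\ket{\psi}\|^2=\sum_n\cos^2(\phi_n)|\iprod{\psi|n}|^2\in[0,1]$ and $g(t):=t^2+(1-t)^2$, the bracket in \eqref{eqn:lyap} equals exactly $g(\cos^2\phi_{\bar{n}})-g(c)$, because $\cos^4(\phi_{\bar{n}})+\sin^4(\phi_{\bar{n}})=g(\cos^2\phi_{\bar{n}})$ and $\|M_g\ket{\psi}\|^4+\|M_e\ket{\psi}\|^4=c^2+(1-c)^2=g(c)$.

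The key step is the estimate $-V_2(\ket{\psi})=g(c)-g(\cos^2\phi_{\bar{n}})\le\frac{1}{2\sigma_{\min}}\,V_1(\ket{\psi})$. To obtain it I would use that $g$ is convex ($g''\equiv4$) and that $c=\sum_n|\iprod{\psi|n}|^2\cos^2(\phi_n)$ is a convex combination of the points $\cos^2(\phi_n)$: the tangent-line inequality $g(t)\ge g(c)+g'(c)(t-c)$, evaluated at $t=\cos^2\phi_n$ and summed against the weights $|\iprod{\psi|n}|^2$ (which sum to $1$ and have weighted mean $c$), gives $g(c)\le\sum_n|\iprod{\psi|n}|^2\,g(\cos^2\phi_n)$. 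Subtracting $g(\cos^2\phi_{\bar{n}})$, noting the $n=\bar{n}$ term cancels, and using $\tfrac12\le g\le1$ on $[0,1]$ yields
\[
-V_2(\ket{\psi})\;\le\;\sum_{n\ne\bar{n}}|\iprod{\psi|n}|^2\bigl(g(\cos^2\phi_n)-g(\cos^2\phi_{\bar{n}})\bigr)\;\le\;\tfrac12\sum_{n\ne\bar{n}}|\iprod{\psi|n}|^2\;\le\;\tfrac{1}{2\sigma_{\min}}\sum_{n\ne\bar{n}}\sigma_n|\iprod{\psi|n}|^2\;=\;\tfrac{1}{2\sigma_{\min}}V_1(\ket{\psi}).
\]
Hence $V(\ket{\psi})=V_1(\ket{\psi})+\delta V_2(\ket{\psi})\ge\bigl(1-\tfrac{\delta}{2\sigma_{\min}}\bigr)V_1(\ket{\psi})\ge0$ as soon as $\delta\le2\sigma_{\min}$, which is exactly a smallness condition on $\delta$.

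I expect the only genuine obstacle to be spotting this structure: the regions where the correction $V_2$ is negative are precisely those where $\ket{\psi}$ carries appreciable weight on non-target Fock components $\ket{n}$ with $n\ne\bar{n}$, and each such component is penalised in $V_1$ by a coefficient $\sigma_n\ge\sigma_{\min}>0$, so $V_1$ can absorb the deficit; the convexity of $g$ is what converts this heuristic into the clean inequality above. Everything else—the identity $\|M_g\ket{\psi}\|^2+\|M_e\ket{\psi}\|^2=1$, the convexity and the range of $g$, the monotonicity of $(\sigma_n)$—is routine. One should also verify the cases $\bar{n}=0,1$ alluded to in the footnote, where $V$ is defined slightly differently; the same convexity argument applies verbatim.
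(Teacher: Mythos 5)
Your proof is correct, and its overall skeleton is the one the paper uses: split $V$ into the $\sigma$-weighted sum $V_1$ and the $\delta$-correction, bound the possible negativity of the correction by a constant times the mass $\sum_{n\neq\bar{n}}|\iprod{\psi|n}|^2$ carried off the target state, and absorb that into $V_1$ via $\sigma_n\geq\min\{\sigma_{\bar{n}-1},\sigma_{\bar{n}+1}\}>0$. Where you differ is in how the central estimate is obtained. The paper treats the two fourth-power differences separately, factoring $\cos^4(\phi_{\bar{n}})-\|M_g\ket{\psi}\|^4$ as a product of a difference and a sum of squares, expanding $\|M_g\ket{\psi}\|^2=\sum_n\cos^2(\phi_n)|\iprod{\psi|n}|^2$, and bounding each factor crudely; repeating for the $M_e$ term it arrives at $V(\ket{\psi})\geq\sum_n\bigl(\sigma_n-2\delta(1-\delta_{n\bar{n}})\bigr)|\iprod{\psi|n}|^2$, i.e.\ the deficit is controlled with constant $2$. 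You instead exploit $M_g^2+M_e^2=\bbI$ to collapse the bracket into $g(\cos^2\phi_{\bar{n}})-g(c)$ for the single scalar $c=\|M_g\ket{\psi}\|^2$ and the convex function $g(t)=t^2+(1-t)^2$, and then Jensen's inequality (your tangent-line argument) plus $\tfrac12\le g\le 1$ gives the deficit with constant $\tfrac12$. Your route is arguably cleaner: it handles both measurement operators at once, avoids the sign bookkeeping hidden in the paper's factorization step, and yields the slightly better admissible range $\delta\le 2\sigma_{\min}$ versus the paper's implicit $\delta\lesssim\sigma_{\min}/2$; the paper's term-by-term computation, on the other hand, is the more pedestrian expansion that requires no appeal to convexity. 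Either way the conclusion and the smallness condition on $\delta$ are of the same nature, so this is a stylistic rather than substantive divergence.
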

\begin{proof}
We have, 
\begin{eqnarray*}
\cos^4(\phi_{\bar{n}}) - \|M_g\ket{\psi}\|^4 &=& (\cos^2(\phi_{\bar{n}}) - \|M_g\ket{\psi}\|^2) (\cos^2(\phi_{\bar{n}}) + \|M_g\ket{\psi}\|^2) \\
&\geq&  \left(\cos^2(\phi_{\bar{n}}) - \sum_{n=0}^\infty \cos^2(\phi_n)|\iprod{\psi|n}|^2\right) \cos^2(\phi_{\bar{n}})\\
&\geq& -\sum_{\substack{ n= 0\\ n\neq \bar{n}}}|\iprod{\psi|n}|^2.
\end{eqnarray*}
Using a similar analysis for $\sin^4(\phi_{\bar{n}}) -  \|M_e\ket{\psi}\|^4$ we get 
\begin{equation*}
V(\ket{\psi}) \geq \sum_n (\sigma_n - 2\delta(1 - \delta_{n\bar{n}}))|\iprod{\psi|n}|^2.
\end{equation*}

Here, $\delta_{n\bar{n}}$ is the Kronecker-delta. Because 
\begin{equation*}\sigma_n\geq \min\{\sigma_{\bar{n}+1},\sigma_{\bar{n}-1}\} > 0\end{equation*} for all $n\neq \bar{n}$, for $\delta$ small enough, $V \geq 0$. 
\end{proof}
For the remainder of this paper, we assume that $\delta$ has been chosen small enough to ensure that $V$ is non-negative. Also note that for $\delta$ small enough $V(\ket{\psi})=0$ if and only if $\ket{\psi} = \ket{\bar{n}}$. Therefore $\ket{\bar{n}}$ is a strict global minimum for $V$.

\begin{lemma}\label{lem:VlowSem} The function $V$ is lower semi-continuous on $\cD(V)$. 
\end{lemma}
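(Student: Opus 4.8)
## Proof Proposal for Lemma~\ref{lem:VlowSem}

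The plan is to write $V = V_1 + V_2$ where $V_1(\ket{\psi}) = \sum_{n=0}^\infty \sigma_n |\iprod{\psi|n}|^2$ is the ``unbounded'' part and $V_2(\ket{\psi}) = \delta\left(\cos^4(\phi_{\bar n}) + \sin^4(\phi_{\bar n}) - \|M_g\ket{\psi}\|^4 - \|M_e\ket{\psi}\|^4\right)$ is the bounded correction, and to treat each separately. For $V_1$, the key observation is that each partial sum $V_1^{(m)}(\ket{\psi}) = \sum_{n=0}^m \sigma_n |\iprod{\psi|n}|^2$ is continuous on $\bar B_1$ (it is a finite sum of continuous functions $\ket{\psi}\mapsto |\iprod{\psi|n}|^2$, each continuous because $\ket{n}$ is a fixed vector and the inner product is continuous in the $\cH$-norm topology inherited by $\bar B_1$), and the sequence $V_1^{(m)}$ is non-decreasing in $m$ since $\sigma_n \geq 0$. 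Hence $V_1 = \sup_m V_1^{(m)}$ is a pointwise supremum of continuous functions, and a supremum of an arbitrary family of lower semi-continuous (in particular continuous) functions is lower semi-continuous. This gives lower semi-continuity of $V_1$ on all of $\bar B_1$, hence in particular on $\cD(V)$.

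Next I would handle $V_2$. Here the cleanest route is to show $V_2$ is in fact continuous on $\bar B_1$. Write $\|M_g\ket{\psi}\|^2 = \iprod{\psi|M_g^2|\psi} = \sum_{n=0}^\infty \cos^2(\phi_n) |\iprod{\psi|n}|^2$ and similarly for $M_e$ with $\sin^2(\phi_n)$. Since $M_g$ and $M_e$ are bounded operators (indeed $\|M_g\|, \|M_e\| \leq 1$ because $\cos^2 + \sin^2 = 1$ gives $M_g^2 + M_e^2 = I$), the maps $\ket{\psi} \mapsto \|M_g\ket{\psi}\|^2$ and $\ket{\psi}\mapsto\|M_e\ket{\psi}\|^2$ are continuous on $\bar B_1$ in the $\cH$-norm topology; composing with the continuous function $t \mapsto t^2$ and taking the finite linear combination defining $V_2$ shows $V_2$ is continuous on $\bar B_1$. (Alternatively one can note the series $\sum_n \cos^2(\phi_n)|\iprod{\psi|n}|^2$ converges uniformly on $\bar B_1$ since the summands are bounded by $|\iprod{\psi|n}|^2$ and $\sum_n |\iprod{\psi|n}|^2 = 1$, so the limit is continuous.) Adding a continuous function to a lower semi-continuous function yields a lower semi-continuous function, so $V = V_1 + V_2$ is lower semi-continuous on $\bar B_1$, and restricting to $\cD(V)$ completes the argument.

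I do not anticipate a serious obstacle here; the only point requiring a little care is making sure the boundedness of $M_g, M_e$ is invoked correctly so that $V_2$ is genuinely continuous (the operators $a, a^\dagger, N$ are unbounded, but $M_g, M_e$ are bounded functions of $N$, so this is fine), and making sure one does not need any hypothesis beyond $\sigma_n \geq 0$ for the supremum-of-continuous-functions argument. One subtlety worth a sentence in the write-up: lower semi-continuity is asserted for $V$ viewed as a map into $[0,\infty]$, so at points of $\bar B_1 \setminus \cD(V)$ where $V = +\infty$ the condition $\liminf_{\ket{\psi'}\to\ket{\psi}} V(\ket{\psi'}) \geq V(\ket{\psi}) = +\infty$ must hold; this is exactly what the supremum representation $V_1 = \sup_m V_1^{(m)}$ delivers, since for any $\Lambda$ there is $m$ with $V_1^{(m)}(\ket{\psi}) > \Lambda$ and then $V_1^{(m)} > \Lambda$ on a neighborhood by continuity. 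So the supremum-of-continuous-functions argument is really the heart of the matter and it handles the $[0,\infty]$-valued case seamlessly.
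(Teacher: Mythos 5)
Your proof is correct and rests on the same decomposition as the paper's: both split off the $M_g$, $M_e$ terms as continuous (these operators are bounded, indeed contractions since $M_g^2+M_e^2=\bbI$) and reduce the matter to the weighted sum $\sum_n \sigma_n|\iprod{\psi|n}|^2$. The only difference is how that sum is handled: the paper gives an explicit $\epsilon$--$N$ truncation argument, using $V(\ket{\psi_0})<\infty$ to make the tail beyond $N$ small and the continuity of the finite partial sum to conclude, whereas you invoke the standard fact that the pointwise supremum of a non-decreasing sequence of continuous functions (the partial sums, non-decreasing because $\sigma_n\geq 0$) is lower semi-continuous. These are the same mechanism --- the paper's computation is essentially the unrolled proof of that standard fact --- but your packaging is marginally more general: it never uses finiteness of $V$ at the base point, so it gives lower semi-continuity of the $[0,\infty]$-valued function on all of $\bar{B}_1$ rather than only on $\cD(V)$, which is in fact what is implicitly wanted later (e.g.\ for the closedness of the sublevel sets $\cV_C$ in Lemma~\ref{lem:comp}).
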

\begin{proof}
Because $M_g$ and $M_e$ are bounded operators, $\|M_g\ket{\psi}\|$ and $\|M_e\ket{\psi}\|$ are continuous functions on $\cD(V)$. So we only need to prove the lower semi-continuity of $\|\ket{\psi}\|_\sigma \triangleq \sum_{n=0}^\infty \sigma_n|\iprod{\psi|n}|^2$.

Let $\ket{\psi_0} = \sum_{n=0}^\infty c_n\ket{n} \in \cD(V)$. Let $\epsilon > 0$ be given. Then the finiteness of $V(\ket{\psi_0})$ implies that there exists an $N$ such that 
\begin{equation*}
\sum_{n=N}^\infty \sigma_n|\iprod{\psi|n}|^2 < \frac{\epsilon}{2}.
\end{equation*}
We can choose $\kappa$ small enough such that for all $c'_n$ satisfying $|c'_n-c_n|^2 < \kappa$, 
\begin{equation*}
\sum_{n=0}^{N-1} \sigma_n\big||c'_n|^2-|c_n|^2 \big|< \frac{\epsilon}{4}.
\end{equation*}
Therefore, for all $\ket{\psi} \in B_\kappa(\ket{\psi_0})$ 
\begin{eqnarray*}
\sum_{n=0}^{\infty} \sigma_n|\iprod{\psi|n}|^2 &\geq& \sum_{n=0}^{N-1} \sigma_n|\iprod{\psi|n}|^2\\
&>& \sum_{n=0}^{N-1} \sigma_n|\iprod{\psi_0|n}|^2 - \frac{\epsilon}{2}\\
&>& \sum_{n=0}^{\infty} \sigma_n|\iprod{\psi_0|n}|^2 - \epsilon.
\end{eqnarray*}
\end{proof}

\begin{lemma}\label{lem:comp}
For all $C \geq 0$, the set $\cV_C = \{\ket{\psi}\in \bar{B}_1: V(\ket{\psi})\leq C]\}$ is compact in $\bar{B}_1$ with respect to the topology inherited from $\cH$.
\end{lemma}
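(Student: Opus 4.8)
# Proof Proposal for Lemma (compactness of $\cV_C$)

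The plan is to show that $\cV_C$ is both closed and totally bounded (equivalently sequentially compact) as a subset of the metric space $\bar{B}_1 \subset \cH$. Closedness comes essentially for free from the lower semi-continuity already established: since $V$ is lower semi-continuous on $\cD(V)$ and takes the value $+\infty$ outside $\cD(V)$ (so it is lower semi-continuous as a map $\bar{B}_1 \to [0,\infty]$), the sublevel set $\cV_C = \{V \leq C\}$ is closed in $\bar{B}_1$, hence closed in $\cH$ since $\bar{B}_1$ is closed.

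The substantive part is compactness. First I would isolate the "tail-controlling" piece of $V$: from the non-negativity lemma's estimate, $V(\ket{\psi}) \geq \sum_n (\sigma_n - 2\delta)|\iprod{\psi|n}|^2$, and since $\sigma_n \to \infty$ as $n \to \infty$ while the displacement/measurement correction term is bounded, there is a constant $c > 0$ and a sequence $\tilde\sigma_n \to \infty$ with $V(\ket{\psi}) \geq \sum_n \tilde\sigma_n |\iprod{\psi|n}|^2 - c$ on all of $\bar{B}_1$. Hence on $\cV_C$ we have the uniform tail bound $\sum_{n \geq N} |\iprod{\psi|n}|^2 \leq (C+c)/\min_{n \geq N}\tilde\sigma_n =: \eta_N$, where $\eta_N \to 0$ as $N \to \infty$. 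Now I would run the standard diagonal argument: given a sequence $\{\ket{\psi^{(j)}}\} \subset \cV_C$, each coordinate $c_n^{(j)} = \iprod{\psi^{(j)}|n}$ is bounded in $\bbC$, so by a diagonal extraction there is a subsequence along which every coordinate converges, say $c_n^{(j)} \to c_n$. The uniform tail bound guarantees $\sum_n |c_n|^2 \leq 1$ and, more importantly, that the convergence is in $\cH$-norm: split $\|\ket{\psi^{(j)}} - \ket{\psi}\|^2$ into a finite head (which goes to $0$ by coordinatewise convergence) and tails (each $\leq 2\eta_N$, uniformly small). This shows $\ket{\psi^{(j)}} \to \ket{\psi}$ in $\cH$; since $\|\ket{\psi^{(j)}}\| = 1$, the limit satisfies $\|\ket{\psi}\| = 1$, so $\ket{\psi} \in \bar{B}_1$, and by lower semi-continuity $V(\ket{\psi}) \leq \liminf V(\ket{\psi^{(j)}}) \leq C$, i.e. $\ket{\psi} \in \cV_C$. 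Thus every sequence in $\cV_C$ has a subsequence converging in $\cV_C$, so $\cV_C$ is (sequentially, hence) compact.

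The main obstacle — really the only place where care is needed — is converting weak/coordinatewise convergence into strong ($\cH$-norm) convergence; this is exactly what the tail estimate buys us, and it is the reason the coefficients $\sigma_n$ were chosen to diverge. One should also double-check the edge case $C < \min\{\sigma_{\bar n - 1}, \sigma_{\bar n+1}\} - 2\delta$ style thresholds and the possibility $\cV_C = \emptyset$ or $\cV_C = \{\ket{\bar n}\}$, but these are trivially compact. Everything else (the diagonal extraction, the head/tail split) is routine, so I would not belabor it in the write-up beyond citing the uniform tail bound.
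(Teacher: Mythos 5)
Your proposal is correct and follows essentially the same route as the paper: a uniform tail bound on $\sum_{n\geq N}|\iprod{\psi|n}|^2$ from $\sigma_n\to\infty$ (after absorbing the bounded $\delta$-term), a diagonal extraction giving coordinatewise convergence, a head/tail split upgrading this to $\cH$-norm convergence, and lower semi-continuity of $V$ to conclude the limit lies in $\cV_C$. Your explicit remark that $V$ should be treated as an extended-real-valued lower semi-continuous function on all of $\bar{B}_1$ (so that closedness of the sublevel set follows) is a small but welcome refinement of the paper's argument.
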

\begin{proof}
Let $\ket{\psi_1},\ket{\psi_2},\ldots$ be a sequence in $\cV_C$ and let 
\begin{equation*}
\ket{\psi_l} = \sum_{m=0}^\infty c_{l,m}\ket{m}.
\end{equation*}
Because, $\|M_g \ket{\psi}\|,\|M_e \ket{\psi}\| \leq 1$ and $\sigma_m$ is strictly increasing for $m > \bar{n}$, we have 
\begin{equation*}
\sum_{m'=m}^\infty |c_{l,m'}|^2 \leq \frac{C- 4}{\sigma_m}
\end{equation*} 
for all $l = 1,2,\ldots$ and $m > \bar{n}$. Therefore, using a diagonalization argument we know that there exists a subsequence $\ket{\psi_{l_p}}$ and a set of numbers $c_{\infty,m}$, $m \in \bbZ^+_0$ such that 
\begin{eqnarray}
&\sum_{m'=m}^\infty |c_{\infty,m'}|^2 \leq  \frac{C- 4}{\sigma_m}&\textrm{ if } m > \bar{n} \label{eqn:lcomp1a}\\
&|c_{\infty,m}|^2 \leq 1&\textrm{ if } m\leq \bar{n} \label{eqn:lcomp1b}\\
&c_{l,m}\to c_{\infty,m}.&\label{eqn:lcomp1c}
\end{eqnarray}
We claim that $\ket{\psi_{l_n}} \to \sum_{m=0}^\infty c_{\infty,m}\ket{m}$. To see this suppose $\epsilon > 0$ is given. Then because $\sigma_m\to \infty$ as $m\to\infty$, there exists an $M$ such that 
\begin{equation}\label{eqn:lcomp2}
(C-4)/\sigma_M <\epsilon/2.
\end{equation}
Also because $c_{l_p,m}\to c_{\infty,m}$ as $p \to \infty$, there exists a $P$ large enough such that for all $p\geq P$, 
\begin{equation}\label{eqn:lcomp3}
|c_{l_p,m} - c_{\infty,m}|^2 < \frac{\epsilon}{2M}\textrm{ for } m = 1,2,\ldots,M-1
\end{equation}
Combining Equations~\eqref{eqn:lcomp1a}-\eqref{eqn:lcomp3} we get for $p\geq P$
\begin{equation*}
\left\|\ket{\psi_{l_p}} - \sum_{m=0}^\infty c_{\infty,m}\ket{m}\right\| < \epsilon.
\end{equation*}
The lower semi-continuity of $V$ implies that the set $\cV_C$ is closed. Therefore $\sum_{m=0}^\infty c_{\infty,m}\ket{m} \in \cV_C$. Hence $\cV_C$ is compact.
\end{proof}

Let $\{s_n\}$ be any sequence of positive numbers with $s_n\to \infty$ as $n\to \infty$. Define an inner product
\begin{equation*}
\iprod{x|y}_s= \sum_{n=0}^\infty s_n\iprod{x|n}_\cH\iprod{n,y}_\cH.
\end{equation*}
Here the $\iprod{\cdot|\cdot}_s$ is defined on the linear subspace $\cH_s$ of $\cH$ on which $\|\cdot\|_s \triangleq \sqrt{\iprod{\cdot|\cdot}_s}$ is finite.

Given a linear operator $A$, let $R_\lambda(A) = (\lambda I - A)^{-1}$ be its resolvent operator.
We recall below a Theorem in~\cite[Th. 12.31]{Renardy2004}.
\begin{theorem}\label{the:anal} A closed, densely defined operator $A$ in some Hilbert space $X$ is the generator of an analytic semigroup $\exp(\alpha A)$ (w.r.t. the uniform topology of the set of bounded operators on $X$) if and only if there exists $\omega\in \bbR$ such that the half-plane $\Re{\lambda} > \omega$ is contained in the resolvent set of $A$ and, moreover,
there is a constant $C$ such that
\begin{equation*}
\|R_\lambda(A)\| \leq C/|\lambda - \omega|.
\end{equation*}
\end{theorem}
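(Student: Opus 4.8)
This is a classical Hille--Yosida type characterization of analytic semigroups (it is Th.~12.31 of \cite{Renardy2004}; comparable statements appear in the standard monographs on $C_0$-semigroups), so I only sketch the standard two-sided argument. First I would reduce to $\omega=0$: replacing $A$ by $A-\omega$ multiplies $\exp(\alpha A)$ by the scalar $e^{\omega\alpha}$, which affects neither closedness and density of the domain nor analyticity, and it shifts the resolvent set by $\omega$; so the hypothesis becomes $\{\Re{\lambda}>0\}\subset\rho(A)$ together with $\|R_\lambda(A)\|\le C/|\lambda|$ there, and the conclusion becomes: $A$ generates a semigroup analytic on a sector about the positive real axis, bounded on closed subsectors.

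\emph{Sufficiency.} The first step is to enlarge the region on which the resolvent is controlled: expanding $R_\mu(A)$ in a Neumann series about points $\lambda_0$ with $\Re{\lambda_0}>0$ and using $\|R_\lambda(A)\|\le C/|\lambda|$, one shows $\rho(A)$ contains a sector $\Sigma=\{\lambda\neq0:\ |\arg\lambda|<\tfrac{\pi}{2}+\eta\}$ for some $\eta>0$, with $\|R_\lambda(A)\|\le C'/|\lambda|$ on $\Sigma$. Fix $\gamma\in(\tfrac{\pi}{2},\tfrac{\pi}{2}+\eta)$, let $\Gamma$ be the boundary of $\{|\arg\lambda|\le\gamma\}$ oriented with increasing imaginary part, and for $z$ with $|\arg z|<\eta$ (small enough) set
\begin{equation*}
T(z)=\frac{1}{2\pi i}\int_\Gamma e^{\lambda z}\,R_\lambda(A)\,d\lambda .
\end{equation*}
On the two rays of $\Gamma$ one has $|e^{\lambda z}|=e^{|\lambda|\,|z|\cos(\gamma+\arg z)}$ with $\cos(\gamma+\arg z)<0$, so together with $\|R_\lambda(A)\|\le C'/|\lambda|$ the integral converges absolutely in $\cL(X)$, and the same estimate after differentiation under the integral sign (which only inserts a factor $\lambda$) shows $z\mapsto T(z)$ is norm-analytic on the open sector. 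The semigroup law $T(z_1)T(z_2)=T(z_1+z_2)$ follows by writing the product as a double contour integral over two nested copies of $\Gamma$, applying the resolvent identity $R_\lambda(A)R_\mu(A)=\bigl(R_\mu(A)-R_\lambda(A)\bigr)/(\lambda-\mu)$, and evaluating the resulting one-dimensional integrals by residues. Strong continuity $T(t)x\to x$ as $t\downarrow0$ comes from $\tfrac{1}{2\pi i}\int_\Gamma e^{\lambda t}\lambda^{-1}\,d\lambda=1$ and the extra decay of $R_\lambda(A)-\lambda^{-1}I=\lambda^{-1}R_\lambda(A)A$ on $\cD(A)$, extended by density. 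Finally, differentiating the integral gives $\mathrm{Range}(T(t))\subset\cD(A)$ and $\tfrac{d}{dt}T(t)=AT(t)$ for $t>0$, and $\tfrac{1}{t}(T(t)-I)x\to Ax$ for $x\in\cD(A)$; by uniqueness of the generator of a $C_0$-semigroup this strongly continuous, norm-analytic semigroup has generator exactly $A$.

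\emph{Necessity.} A generator of a $C_0$-semigroup is automatically closed and densely defined, so only the resolvent estimate needs proof. Boundedness of $T(z)$ on closed subsectors gives $\sup_{t\ge0}\|T(t)\|=:M<\infty$, hence $R_\lambda(A)x=\int_0^\infty e^{-\lambda t}T(t)x\,dt$ for $\Re{\lambda}>0$. Given $\mu$ with $\Re{\mu}>0$, choose $\theta=\theta(\arg\mu)$ with $|\theta|$ below the half-angle of analyticity and with $\Re{\mu e^{i\theta}}\ge c|\mu|$; rotating the ray of integration to $\{se^{i\theta}:s\ge0\}$ (justified by Cauchy's theorem, using boundedness of $T$ and decay of $e^{-\mu z}T(z)$ in the sector) yields $R_\mu(A)x=e^{i\theta}\int_0^\infty e^{-\mu e^{i\theta}s}T(e^{i\theta}s)x\,ds$, whence $\|R_\mu(A)\|\le M/(c|\mu|)$. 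This gives $\{\Re{\lambda}>0\}\subset\rho(A)$ with the stated bound, and undoing the initial reduction returns the statement for the original $A$ and $\omega$.

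\emph{Main obstacle.} The substantive part is sufficiency. The one genuinely delicate point is verifying that the semigroup \emph{constructed} from the Dunford integral is strongly continuous at the origin and has $A$ itself (not some proper extension) as generator; in addition one must carry out the sector enlargement with explicit control of the new half-angle $\eta$ and constant $C'$, and choose the contour $\Gamma$ and the admissible range of $z$ compatibly. By contrast the absolute convergence and analyticity of the contour integral, and the contour algebra behind the semigroup law, are routine once this geometry is fixed.
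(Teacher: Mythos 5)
This statement is not proved in the paper at all: it is quoted verbatim from \cite[Th.~12.31]{Renardy2004} and used as a black box to establish Lemma~\ref{lem:anal}, so there is no in-paper argument to compare yours against. Your sketch is the standard textbook proof of this classical characterization (reduction to $\omega=0$, enlargement of the resolvent estimate to a sector $|\arg\lambda|<\tfrac{\pi}{2}+\eta$ by a Neumann-series perturbation, Dunford contour integral for sufficiency, Laplace-transform representation plus rotation of the ray of integration for necessity), and the overall structure, including your identification of the delicate points (strong continuity at $0$ and identification of the generator as $A$ itself, not an extension), is sound and consistent with the convention that ``analytic semigroup'' includes boundedness on closed subsectors, which is what the resolvent bound actually characterizes.

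One concrete technical slip: you take $\Gamma$ to be the boundary of the full sector $\{|\arg\lambda|\le\gamma\}$, which passes through the origin. There the only available bound is $\|R_\lambda(A)\|\le C'/|\lambda|$, so the integrand is of size $C'/|\lambda|$ near $\lambda=0$ and the integral is \emph{not} absolutely convergent on that contour (logarithmic divergence); moreover $\lambda=0$ need not lie in the resolvent set, since the hypothesis only covers $\Re{\lambda}>\omega$ and the sector obtained by perturbation is punctured at the origin. The standard fix is to indent $\Gamma$ around the origin, e.g.\ two rays $\{re^{\pm i\gamma}: r\ge r_0\}$ joined by the arc $\{r_0e^{i\theta}:|\theta|\le\gamma\}$ with $r_0\sim 1/|z|$; Cauchy's theorem shows $T(z)$ is independent of $r_0$, and all your subsequent steps (analyticity, semigroup law via nested contours and the resolvent identity, the computation $\tfrac{1}{2\pi i}\int_\Gamma e^{\lambda t}\lambda^{-1}d\lambda=1$ for strong continuity) go through unchanged on the indented contour. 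With that correction the sketch is a faithful outline of the cited proof.
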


We show that if we consider $(a^\dagger-a)$ to be an operator on some domain in $\cH_s$ then $\exp(\alpha(a^\dagger-a))$ is an analytic semigroup on $\cH_s$.

\begin{lemma} The operator $i(a-a^\dagger)$ is symmetric on $\cH_s$.
\end{lemma}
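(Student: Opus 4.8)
The plan is to unwind the definitions and reduce the claim to a direct computation on the canonical Fock basis. Recall that $\cH_s$ is the completion (or rather the linear subspace on which $\|\cdot\|_s$ is finite) of the span of the $\ket{n}$ with respect to $\iprod{\cdot|\cdot}_s$, and that the natural dense domain to work on is the span $\mathcal{F}$ of finitely supported vectors $\sum_{n} c_n\ket{n}$, which is clearly contained in $\cH_s$ and invariant under $a$ and $a^\dagger$ by the ladder relations~\eqref{eqn:aadagN}. So first I would declare the domain: take $\cD = \mathcal{F}$ (or more carefully the set of $\ket{\psi}\in\cH_s$ for which $a\ket{\psi},a^\dagger\ket{\psi}\in\cH_s$), and observe it is dense in $\cH_s$. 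Then ``symmetric'' means precisely $\iprod{i(a-a^\dagger)x\,|\,y}_s = \iprod{x\,|\,i(a-a^\dagger)y}_s$ for all $x,y\in\cD$; equivalently, writing $T = i(a-a^\dagger)$, that $T = T^*$ on $\cD\times\cD$.

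The key step is the basis computation. From~\eqref{eqn:aadagN}, $a\ket{n} = \sqrt{n}\ket{n-1}$ and $a^\dagger\ket{n} = \sqrt{n+1}\ket{n+1}$, so
\begin{equation*}
i(a-a^\dagger)\ket{n} = i\sqrt{n}\,\ket{n-1} - i\sqrt{n+1}\,\ket{n+1}.
\end{equation*}
I would then compute $\iprod{i(a-a^\dagger)\ket{m}\,|\,\ket{n}}_s$ and $\iprod{\ket{m}\,|\,i(a-a^\dagger)\ket{n}}_s$ using the diagonal form $\iprod{\ket{j}\,|\,\ket{k}}_s = s_j\delta_{jk}$. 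The first inner product is nonzero only when $n\in\{m-1,m+1\}$, giving $-i\sqrt{m}\,s_{m-1}$ when $n=m-1$ and $i\sqrt{m+1}\,s_{m+1}$... wait, one must be careful with the conjugate-linearity convention: with $\iprod{\cdot|\cdot}_s$ conjugate-linear in the first slot, $\iprod{i\xi\,|\,\eta}_s = -i\iprod{\xi\,|\,\eta}_s$, so the two sides acquire matching signs. Carrying this out, for $n=m+1$ one gets $-i\sqrt{m+1}\,s_{m+1}$ on the left versus $-i\sqrt{m+1}\,s_{m+1}$ on the right (the right side being $\iprod{\ket{m}\,|\,i\sqrt{n}\ket{n-1}} = i\sqrt{m+1}\,s_m\cdot$... here the point is that the ladder operators shift the index so the surviving $s$-weight is the \emph{same} index $m$ on both sides once the Kronecker delta is applied). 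The upshot is that both sides equal the same expression, so equality holds term by term; then extend to general finite combinations $x=\sum c_m\ket{m}$, $y=\sum d_n\ket{n}$ by bilinearity, and the symmetry identity follows.

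The main obstacle, such as it is, is bookkeeping rather than mathematics: getting the conjugation convention of $\iprod{\cdot|\cdot}_s$ consistent with the factor of $i$, and making sure the domain is chosen so that $Tx$ and $Ty$ genuinely lie in $\cH_s$ (on $\mathcal{F}$ this is automatic, since $T$ maps finitely supported vectors to finitely supported vectors, and every finitely supported vector has finite $\|\cdot\|_s$-norm regardless of the growth of $s_n$). I would state explicitly that symmetry here is claimed only on this dense domain — it is not (and need not be) self-adjointness — since all that is needed downstream is to feed this into a generation theorem (Theorem~\ref{the:anal}) for the analytic semigroup $\exp(\alpha(a^\dagger-a))$ on $\cH_s$. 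Finally I would remark that the identity $\iprod{Tx\,|\,y}_s = \iprod{x\,|\,Ty}_s$ combined with the weight $s_n\to\infty$ is exactly what makes $a^\dagger - a = -iT$ behave well relative to the $s$-inner product, setting up the resolvent estimate needed in the next lemma.
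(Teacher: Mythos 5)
Your strategy---check $\iprod{Tx|y}_s=\iprod{x|Ty}_s$ for $T=i(a-a^\dagger)$ on the finite linear span of the Fock basis and extend by sesquilinearity---is essentially the paper's own approach, but the key verification fails, and it fails exactly at the spot you wave through. Compute both sides on basis vectors with $n=m+1$, using $\iprod{\ket{j}|\ket{k}}_s=s_j\delta_{jk}$ and conjugate-linearity in the first slot: the $\ket{m+1}$-component of $T\ket{m}$ is $-i\sqrt{m+1}$, so $\iprod{T\ket{m}|\ket{m+1}}_s=i\sqrt{m+1}\,s_{m+1}$, whereas the $\ket{m}$-component of $T\ket{m+1}$ is $i\sqrt{m+1}$, so $\iprod{\ket{m}|T\ket{m+1}}_s=i\sqrt{m+1}\,s_m$. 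Your parenthetical claim that ``the surviving $s$-weight is the same index $m$ on both sides once the Kronecker delta is applied'' is false: one side carries $s_{m+1}$, the other $s_m$, and these differ for any non-constant weight sequence, in particular when $s_n\to\infty$. The simplest counterexample is $x=\ket{0}$, $y=\ket{1}$: then $\iprod{Tx|y}_s=i\,s_1$ while $\iprod{x|Ty}_s=i\,s_0$. Since the failure already occurs for finitely supported vectors, no more careful choice of dense domain repairs the argument; the identity you set out to prove does not hold on $\cH_s$ unless $s_n$ is constant (the unweighted case, where the statement is just the classical skew-adjointness of $a-a^\dagger$ on $\cH$).

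For comparison, the paper's own proof conceals the same issue: in passing from $-i\sum_p s_p\bigl(c_{p+1}\sqrt{p+1}-c_{p-1}\sqrt{p}\bigr)d_p$ to $-i\sum_p s_p\bigl(c_{p+1}d_p-c_pd_{p+1}\bigr)\sqrt{p+1}$ the summation index is shifted in the coefficients but not in the weight; done correctly the second term carries $s_{p+1}$, and the two final expressions then differ by $i\sum_p\sqrt{p+1}\,(s_{p+1}-s_p)\bigl(\overline{c_p}d_{p+1}+\overline{c_{p+1}}d_p\bigr)$, which is not zero in general. So the fact that your conclusion matches the paper's is not evidence that the step can be fixed. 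What a correct computation does give is $\Re{\iprod{(a^\dagger-a)\psi|\psi}_s}=\sum_p\sqrt{p+1}\,(s_{p+1}-s_p)\Re{\overline{c_p}c_{p+1}}$, i.e.\ a quasi-dissipativity (not symmetry) estimate, valid when $\sqrt{p+1}\,(s_{p+1}-s_p)$ is controlled by the adjacent weights --- true for the logarithmically growing weights $\sigma_n$ of Equation~\eqref{eqn:sigmaDef}, false for an arbitrary sequence $s_n\to\infty$. Any route to the semigroup statement of Lemma~\ref{lem:anal} has to go through such a perturbed/dissipative estimate (or a different inner product), not through symmetry of $i(a-a^\dagger)$ on $\cH_s$; as written, your proof has a genuine gap at its central step.
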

\begin{proof}
Let $\ket{\psi},\ket{\phi}\in\cD(a-a^\dagger)\subset \cH_s$ and let $\ket{\psi} = \sum_{n=0}^\infty c_n \ket{n}$ and $\ket{\phi} = \sum_{n=0}^\infty d_n\ket{n}$.
Then, 
\begin{eqnarray*}
(a-a^\dagger)\ket{\psi} &=& \sum_{n=0}^\infty c_n(a-a^\dagger)\ket{n}\\
&=& \sum_{n=0}^\infty c_n(\sqrt{n}\ket{n-1}-\sqrt{n+1}\ket{n+1}) \\
&=& \sum_{n=0}^\infty c_{n+1} \sqrt{n+1}\ket{n} - \sum_{n=1}^\infty c_{n-1}\sqrt{n}\ket{n}
\end{eqnarray*}
Therefore,
\begin{eqnarray*}
\lefteqn{\iprod{i(a-a^\dagger)\psi |\phi}_s}\\
&=& \sum_{p=0}^\infty s_p \iprod{i(a-a^\dagger)\psi|p}_\cH\iprod{p|\phi}_\cH\\
&=& -i\sum_{p=0}^\infty s_p \left(\sum_{n=0}^\infty c_{n+1}\sqrt{n+1}\delta_{np} - \sum_{n=1}^\infty c_{n-1}\sqrt{n}\delta_{np}\right)d_p\\
&=& -i\sum_{p=0}^\infty s_p(c_{p+1}d_p- c_pd_{p+1})\sqrt{p+1}
\end{eqnarray*}
Similarly, 
\begin{equation*}
\iprod{\psi |i(a-a^\dagger)\phi}_s = i\sum_{p=0}^\infty s_p(d_{p+1}c_p- d_pc_{p+1})\sqrt{p+1}
\end{equation*} 
Therefore the operator $i(a-a^\dagger)$ is symmetric. 
\end{proof}
The operators $a^\dagger-a$ is well defined on the finite linear span of the Fock basis $\ket{n}$ and this finite linear span of the Fock basis is obviously dense in the space $\cH_s$. The fact that $a^\dagger-a$ is closable follows from the fact that $i(a-a^\dagger)$ is symmetric (see e.g.~\cite[p. 270]{Kato1966}). Therefore, we can assume that $a^\dagger-a$ is a closed operator (if it is not closed then we can always redefine $a^\dagger-a$ to be its closed linear extension).

\begin{lemma}\label{lem:anal} The semigroup $D_\alpha = \exp(\alpha(a^\dagger - a))$ is analytic  in the set of bounded operators on $\cH_s$ with respect to the uniform operator topology of bounded operators on $\cH_s$.
\end{lemma}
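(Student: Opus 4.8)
The plan is to check the hypotheses of Theorem~\ref{the:anal} for the operator $A := \overline{a^\dagger - a}$ regarded as a closed, densely defined operator on $\cH_s$ (the finite linear span of the Fock vectors $\ket{n}$ is a core, and closedness was arranged above). Since the resulting analytic semigroup generated by $A$ must agree on this core with $\exp(\alpha(a^\dagger-a))$, hence with $D_\alpha$, the whole lemma reduces to producing a real number $\omega$ such that $\{\lambda:\Re{\lambda}>\omega\}$ lies in the resolvent set of $A$ on $\cH_s$, together with the estimate $\|R_\lambda(A)\|_{\cH_s}\le C/|\lambda-\omega|$ on that half-plane.

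The next step is to write the resolvent equation $(\lambda I-A)\psi=\chi$ componentwise: with $\psi=\sum_n c_n\ket{n}$ and $\chi=\sum_n\chi_n\ket{n}$ it becomes the three-term recurrence $\lambda c_n-\sqrt{n}\,c_{n-1}+\sqrt{n+1}\,c_{n+1}=\chi_n$ (for $n\ge 0$, with $c_{-1}=0$), which I would treat as a boundary-value problem at $n=\infty$. For $\Re{\lambda}>0$ the homogeneous recurrence has a one-dimensional space of solutions that decay (roughly like $e^{-\lambda\sqrt{n}}$), and, crucially, because $s_n\to\infty$ any such decaying tail automatically has finite $\|\cdot\|_s$-norm — this is exactly where the growth of the weights enters. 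From this I would extract two facts: first, $\ker(\bar\lambda I-A^{*})$ (the $\cH_s$-adjoint) is trivial for $\Re{\lambda}>0$, since a would-be eigenvector of $A^{*}$ is forced onto the growing branch and so fails to lie in $\cH_s$; hence $\lambda I-A$ has dense range. Second, expanding $\iprod{A\psi|\psi}_s$ on the Fock basis and bounding it by the ratios $s_{n\pm1}/s_n$ gives $\Re{\iprod{A\psi|\psi}_s}\le\omega\|\psi\|_s^2$, hence $\|(\lambda I-A)\psi\|_s\ge(\Re{\lambda}-\omega)\|\psi\|_s$; combined with the density of range, this places the half-plane $\{\Re{\lambda}>\omega\}$ inside $\rho(A)$ with the crude bound $\|R_\lambda(A)\|_s\le 1/(\Re{\lambda}-\omega)$.

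The main obstacle, as always for analyticity, is upgrading this Hille--Yosida-type bound to $\|R_\lambda(A)\|_s\le C/|\lambda-\omega|$, which must survive as $\Im{\lambda}\to\pm\infty$: mere (quasi-)dissipativity yields only a $C_0$-semigroup, so the extra decay of the resolvent has to come from the smoothing encoded in the weighted norm. Concretely I would represent $R_\lambda(A)$ via the Green's kernel $G^{\lambda}_{m,n}$ of the recurrence — built from the decaying-at-infinity solution, the solution singled out by the left endpoint condition at $n=0$, and their Casoratian (discrete Wronskian) — and run a Schur test on $\sqrt{s_m/s_n}\,G^{\lambda}_{m,n}$, showing that the weight factor $\sqrt{s_m/s_n}$ beats the oscillatory part of $G^{\lambda}_{m,n}$ and improves its off-diagonal decay by a factor $1/|\lambda|$ throughout the half-plane. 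Once this estimate is in hand, Theorem~\ref{the:anal} applies and the lemma follows. (A tempting shortcut — realizing $A$ as an $N^{1/2}$-bounded perturbation of the sectorial operator $-N$ and invoking a perturbation theorem for analytic generators — does not close the argument, since such theorems produce the semigroup generated by $-N+(a^\dagger-a)$ rather than by $a^\dagger-a$ itself, so the direct resolvent analysis seems unavoidable.)
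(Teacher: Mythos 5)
You have located the crux more honestly than the paper does: the paper's own proof derives only $\|R_\lambda(a^\dagger-a)\|\le 1/|\Re{\lambda}|$ from symmetry of $i(a-a^\dagger)$ on $\cH_s$ and then invokes Theorem~\ref{the:anal} with $\omega=0$, $C=1$, even though a bound $1/|\Re{\lambda}|$ is not of the form $C/|\lambda-\omega|$ on a half-plane; you correctly observe that such dissipativity-type estimates give only a $C_0$-semigroup and that the real issue is decay of the resolvent as $|\Im{\lambda}|\to\infty$. The problem is that your proposal stops at exactly the same point: the Schur-test estimate on $\sqrt{s_m/s_n}\,G^{\lambda}_{m,n}$ that is supposed to produce the extra factor $1/|\lambda|$ is only announced, and it is precisely the step that cannot be carried out. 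Indeed, the displacement operators form a \emph{group} of bounded operators on $\cH_s$ (the lemma itself presupposes each $D_\alpha$ is bounded on $\cH_s$, and the parity map $U\ket{n}=(-1)^n\ket{n}$ is a $\|\cdot\|_s$-isometry conjugating $D_\alpha$ into $D_{-\alpha}$, so $D_{\alpha_0}$ has bounded inverse $D_{-\alpha_0}$). If the resolvent bound of Theorem~\ref{the:anal} held, the semigroup would be analytic, hence $D_{\alpha_0}\cH_s\subseteq\cD(\overline{a^\dagger-a})$ for $\alpha_0>0$; surjectivity of $D_{\alpha_0}$ would give $\cD(\overline{a^\dagger-a})=\cH_s$, and the closed graph theorem would make $a^\dagger-a$ bounded on $\cH_s$. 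But $\|(a^\dagger-a)\ket{n}\|_s^2/\|\ket{n}\|_s^2\ge (n+1)s_{n+1}/s_n$, which cannot remain bounded when $s_n\to\infty$ (boundedness would force $s_n\to 0$). So the half-plane estimate you are aiming for is false, and no Green's-kernel analysis can deliver it; the gap is not a technical omission but an unprovable step.

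Two secondary points. First, quasi-dissipativity $\Re{\iprod{A\psi|\psi}_s}\le\omega\|\psi\|_s^2$ does not follow from bounding the ratios $s_{n\pm1}/s_n$ alone: the off-diagonal terms carry a factor $\sqrt{n+1}\,(s_{n+1}-s_n)$, so one needs $\sqrt{n}\,|s_{n+1}-s_n|=O(\sqrt{s_n s_{n+1}})$, which holds for slowly varying weights such as the $\sigma_n=O(\ln n)$ actually used later, but not for arbitrary $s_n\to\infty$ (e.g.\ geometric weights); likewise the claim that a decaying tail ``automatically'' has finite $\|\cdot\|_s$-norm is backwards, since growing weights make membership in $\cH_s$ harder, not easier. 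Second, and constructively: what the rest of the paper actually uses (in Lemma~\ref{lem:derBound}) is only a second-order Taylor expansion of $\alpha\mapsto\|D_\alpha\ket{\psi}\|_\sigma^2$ at $\alpha=0$ with a remainder uniform over $\{V\le C\}$. That statement is far weaker than generation of an analytic semigroup, and it is what one should prove directly — for instance via the Baker--Campbell--Hausdorff expansion with explicit control of the third-order term, exploiting the slow growth of $\sigma_n$ — rather than through the resolvent characterization of Theorem~\ref{the:anal}.
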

\begin{proof}
Because $i(a-a^\dagger)$ is symmetric, we have for all complex numbers $\lambda$  (see e.g.~\cite[p. 270]{Kato1966})
\begin{equation*}
\|i(a-a^\dagger)-\lambda\bbI\|\geq |\Im{\lambda}|.
\end{equation*}
Therefore, 
\begin{equation*}
\|R_\lambda(a^\dagger-a)\|\leq \frac{1}{|\Re{\lambda}|}.
\end{equation*}
Therefore with $\omega = 0$ and $C = 1$ and because $(a^\dagger-a)$ is closed and densely defined, the conditions of Theorem~\ref{the:anal} are satisfied. Hence the semigroup generated by $(a^\dagger-a)$ is analytic. 
\end{proof}

\begin{lemma}\label{lem:derBound}
Let $m\neq\bar{n}$ and $C > V(\ket{m})$ be given. Then there exists a constant $\bar{\epsilon} = \bar{\epsilon}(\ket{m}) > 0$ such that for all $\epsilon \in (0,\bar{\epsilon})$ and  $\ket{\psi} \in B'_\epsilon(\ket{m})$ satisfying $V(\ket{\psi}) \leq C$,  
\begin{equation}\label{eqn:diff}
V(D_\alpha \ket{\psi}) - V(\ket{\psi}) = f_1(\ket{\psi})\alpha + f_2(\ket{\psi}) \alpha^2 + O(\bar{\alpha}^3) + O(\delta)
\end{equation}
for $|\alpha| < \bar{\alpha}= \bar{\alpha}(C)$. Moreover, $f_2(\ket{\psi})< \gamma_m < 0$ for some constant $\gamma_m$.
\end{lemma}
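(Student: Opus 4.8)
The plan is to decompose $V=V_1+V_2$, where $V_1(\ket{\psi})=\sum_n\sigma_n|\iprod{\psi|n}|^2$ and $V_2$ is the $\delta$-dependent term, to note that $V_2$ contributes only $O(\delta)$ to the left-hand side of \eqref{eqn:diff}, and to Taylor-expand $V_1(D_\alpha\ket{\psi})$ in $\alpha$ by means of the Baker--Campbell--Hausdorff formula \eqref{eqn:bch}. For the $V_2$ part: since $\|M_g\ket{\phi}\|,\|M_e\ket{\phi}\|\in[0,1]$ for every $\ket{\phi}\in\bar B_1$ and $D_\alpha$ is unitary, one has $|V_2(\ket{\phi})|\le 4\delta$, hence $|V_2(D_\alpha\ket{\psi})-V_2(\ket{\psi})|\le 8\delta=O(\delta)$ uniformly, so it suffices to prove \eqref{eqn:diff} with $V$ replaced by $V_1$, the $V_2$-difference being absorbed into the $O(\delta)$ term. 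Let $\Sigma$ be the self-adjoint operator with $\Sigma\ket{n}=\sigma_n\ket{n}$, so that $V_1(\ket{\psi})=\bra{\psi}\Sigma\ket{\psi}$ and, since $D_\alpha^{\ast}=D_{-\alpha}$, $V_1(D_\alpha\ket{\psi})=\bra{\psi}D_{-\alpha}\Sigma D_\alpha\ket{\psi}$. With $H=a-a^\dagger$ in \eqref{eqn:bch}, set $C_0=\Sigma$ and $C_{k+1}=[a-a^\dagger,C_k]$ (initially on the finite linear span $\mathcal{F}$ of the $\ket{n}$), so that formally $D_{-\alpha}\Sigma D_\alpha=\sum_k\tfrac{\alpha^k}{k!}C_k$, and define $f_1(\ket{\psi})=\bra{\psi}C_1\ket{\psi}$, $f_2(\ket{\psi})=\tfrac12\bra{\psi}C_2\ket{\psi}$.

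The heart of the proof --- and the main obstacle --- is to show that, although $\Sigma$ is unbounded, the operators $C_1$, $C_2$ and $C_3$ extend to \emph{bounded} operators on $\cH$. Writing $h(n)=\sigma_{n+1}-\sigma_n$, one computes $[a-a^\dagger,\Sigma]=h(N)a+a^\dagger h(N)$, so $\|C_1\|\le\sup_n\!\big(\sqrt{n}\,|h(n-1)|+\sqrt{n+1}\,|h(n)|\big)<\infty$, because by \eqref{eqn:sigmaDef} one has $h(n)=1/(n+1)+1/(n+1)^2=O(1/n)$ for $n\ge\bar n$ while $h$ takes only finitely many values for $n<\bar n$. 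Each further commutation with $a-a^\dagger$ lowers the order of the relevant finite difference of $h$ by one --- to $O(n^{-2})$, then $O(n^{-3})$, for large $n$ --- while raising the power of $N^{1/2}$ by at most one on each side, so $C_2$ and $C_3$ are bounded too; equivalently, $\bra{l}C_k\ket{n}=0$ for $|l-n|>k$ and $\sup_{l,n}|\bra{l}C_k\ket{n}|<\infty$ for $k=1,2,3$, and a band operator with uniformly bounded entries is bounded. This is exactly where the logarithmic growth of $\sigma_n$ (harmonic partial sums, whose $k$-th finite differences are $O(n^{-k})$) is essential: a weight growing faster than $\log n$ would already make $C_1$ unbounded and the expansion would fail.

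Granting this, fix first $\ket{\psi}\in\mathcal{F}$. Then $\alpha\mapsto D_\alpha\ket{\psi}$ is a smooth curve in $\cH$ staying in the domain of every operator above, and, using $\tfrac{d}{d\alpha}D_\alpha=(a^\dagger-a)D_\alpha$ and the anti-symmetry of $a-a^\dagger$, $g(\alpha):=\bra{\psi}D_{-\alpha}\Sigma D_\alpha\ket{\psi}$ satisfies $g^{(k)}(\alpha)=\bra{D_\alpha\psi}C_k\ket{D_\alpha\psi}$; Taylor's formula with integral remainder gives
\[
V_1(D_\alpha\ket{\psi})-V_1(\ket{\psi})=f_1(\ket{\psi})\,\alpha+f_2(\ket{\psi})\,\alpha^2+\int_0^\alpha\frac{(\alpha-s)^2}{2}\,\bra{D_s\psi}C_3\ket{D_s\psi}\,ds,
\]
and since $C_3$ is bounded and $\|D_s\ket{\psi}\|=1$ the remainder is at most $\tfrac16\|C_3\|\,|\alpha|^3=O(\bar\alpha^3)$, uniformly in $\ket{\psi}$. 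For a general $\ket{\psi}\in\bar B_1$ with $V(\ket{\psi})\le C$ (hence $V_1(\ket{\psi})<\infty$), approximate by $\ket{\psi_M}=P_M\ket{\psi}/\|P_M\ket{\psi}\|\in\mathcal{F}$, where $P_M$ is the orthogonal projection onto $\mathrm{span}\{\ket{0},\dots,\ket{M}\}$. Since $V_1(\ket{\psi})<\infty$, $\ket{\psi_M}\to\ket{\psi}$ in $\cH$ and in the weighted space $\cH_s$ with $s_n=\sigma_n+1$ (whose squared norm is $V_1+\|\cdot\|^2$); by Lemma~\ref{lem:anal} $D_\alpha$ is bounded on $\cH_s$, so $V_1(D_\alpha\ket{\psi_M})\to V_1(D_\alpha\ket{\psi})<\infty$, and since $C_1,C_2$ are bounded the functionals $f_1,f_2$ are $\|\cdot\|_\cH$-continuous on $\bar B_1$, so letting $M\to\infty$ in the displayed identity yields it for $\ket{\psi}$, with the same remainder bound. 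Re-attaching the $V_2$ contribution gives \eqref{eqn:diff}, with $\bar\alpha$ allowed to be any fixed positive constant.

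Finally we must sign $f_2$. Expanding $C_2$ on $\ket{m}$ gives $f_2(\ket{m})=-\big[m(\sigma_m-\sigma_{m-1})+(m+1)(\sigma_m-\sigma_{m+1})\big]$, with the $\sigma_{m-1}$-term dropped when $m=0$. From \eqref{eqn:sigmaDef}: for $m>\bar n$, $\sigma_m-\sigma_{m-1}=1/m+1/m^2$ and $\sigma_m-\sigma_{m+1}=-\big(1/(m+1)+1/(m+1)^2\big)$, so $f_2(\ket{m})=-\big(1/m-1/(m+1)\big)<0$; for $2\le m<\bar n$ the two differences change sign but the same value $-(1/m-1/(m+1))<0$ results; and $f_2(\ket{0})=-(\sigma_0-\sigma_1)=-\tfrac18$, $f_2(\ket{1})=-\big[(\sigma_1-\sigma_0)+2(\sigma_1-\sigma_2)\big]=-\tfrac38$. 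Hence $f_2(\ket{m})<0$ for every $m\neq\bar n$ (whereas $f_2(\ket{\bar n})=\bar n\,\sigma_{\bar n-1}+(\bar n+1)\,\sigma_{\bar n+1}>0$, consistently with $\ket{\bar n}$ being a minimum). Since the quadratic form $\ket{\psi}\mapsto f_2(\ket{\psi})=\tfrac12\bra{\psi}C_2\ket{\psi}$ is $\|\cdot\|_\cH$-continuous, we may fix $\gamma_m\in(f_2(\ket{m}),0)$ and then $\bar\epsilon=\bar\epsilon(\ket{m})>0$ such that $f_2(\ket{\psi})<\gamma_m$ for all $\ket{\psi}\in B'_{\bar\epsilon}(\ket{m})$; the hypothesis $C>V(\ket{m})$ is used only to guarantee that $\ket{m}$ itself lies in the set $\{V\le C\}$ over which the statement is made.
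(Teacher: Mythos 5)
Your proposal is correct, and while it shares the paper's skeleton (split off the $\delta$-term as $O(\delta)$, expand $\alpha\mapsto V_1(D_\alpha\ket{\psi})$ to second order via Baker--Campbell--Hausdorff, then show the second-order coefficient is negative and bounded away from zero near $\ket{m}$), the technical implementation is genuinely different at the two delicate points. For the expansion, the paper invokes analyticity of $D_\alpha$ with respect to the semi-norm $\|\cdot\|_\sigma$ (Lemma~\ref{lem:anal}) to get a Taylor expansion with a $C$-dependent $\bar\alpha$ and remainder; you instead observe that the iterated commutators $C_1,C_2,C_3$ of $a-a^\dagger$ with $\Sigma$ are \emph{bounded} band operators (because the finite differences of the logarithmically growing $\sigma_n$ decay like $n^{-1},n^{-2},n^{-3}$, exactly the paper's coefficients $-1/(n(n+1))$ and $h(n)=O(1/n)$), prove the identity with integral remainder on finite superpositions, and pass to general $\ket{\psi}$ with $V(\ket{\psi})\le C$ by a density argument in $\cH_s$ with $s_n=\sigma_n+1$ (still leaning on Lemma~\ref{lem:anal}, and, like the paper, on the implicit identification of the $\cH_s$-semigroup with the restriction of the unitary group, plus a lightly justified differentiation through the unbounded $\Sigma$ on $\mathcal F$). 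For the sign of $f_2$, the paper does a hands-on series estimate splitting diagonal and cross terms and uses $V(\ket{\psi})\le C$ to control the tail uniformly, whereas you note that $f_2=\tfrac12\bra{\cdot}C_2\ket{\cdot}$ is $\|\cdot\|_\cH$-Lipschitz on $\bar B_1$ (since $C_2$ is bounded) and simply evaluate $f_2(\ket{m})<0$ explicitly. Your route buys slightly stronger uniformity --- the $O(\bar\alpha^3)$ bound $\tfrac16\|C_3\||\alpha|^3$ and the choice of $\bar\alpha$ do not depend on $C$, and the bound $f_2<\gamma_m$ does not need the hypothesis $V(\ket{\psi})\le C$ at all --- at the cost of having to establish boundedness of $C_2$ and $C_3$ (your scaling argument is a bit terse but the band-entry computation confirms it); the paper's route avoids operator-norm considerations but needs the level-set constraint and a more intricate term-by-term estimate. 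Incidentally, your explicit values $f_2(\ket{0})=-\tfrac18$ and $f_2(\ket{1})=-\tfrac38$ are correct (the paper's ``$-1/4$ for $n=0,1$'' is a harmless slip), and your sign computation correctly covers the cases $m\lessgtr\bar n$ including the neighbors of $\bar n$.
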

\begin{proof}
Because $M_e$ and $M_g$ are bounded operators on $\cH$ and $D_\alpha$ is an analytic semigroup on $\cH$, we have
\begin{equation*}
V(D_\alpha \ket{\psi}) - V(\ket{\psi}) = \|D_\alpha\ket{\psi}\|^2_\sigma- \|\ket{\psi}\|^2_\sigma + O(\delta).
\end{equation*}
where $\|\cdot\|_\sigma \triangleq \sum_{n=0}^\infty \sigma_n|\iprod{\cdot|n}|^2$.

We know from Lemma~\ref{lem:anal} that $D_\alpha$ is analytic with respect to the uniform-operator topology induced by the semi-norm\footnote{Even though the Lemma~\ref{lem:anal} was proven for sequences $s_n> 0$,  we can apply it to the case where one of the $s_n$ is zero. In fact small changes in $s_n$ do not change the analycity of $D_\alpha$. We can prove the same Lemma by considering the quotient spaces of the semi-norm $\|\cdot\|_\sigma$.} $\|\cdot\|_\sigma$. Hence, for all $\ket{\psi}$ such that $\|\ket{\psi}\|_\sigma \leq V(\psi) \leq C$ we can write, using the Taylor series expansion at $\alpha = 0$, 
\begin{equation*}
\|D_\alpha\ket{\psi}\|^2_\sigma - \|\ket{\psi}\|^2_\sigma = f_1(\ket{\psi})\alpha + f_2(\ket{\psi}) \alpha^2 + O(\bar{\alpha}^3)
\end{equation*}
for $|\alpha| < \bar{\alpha}$ and $\bar{\alpha}$ only depends on $C$. 

We now prove the required bound on $f_2$ using the Baker-Campbell-Hausdorff formula~\eqref{eqn:bch}. By noting that 
\begin{equation*}
\|D_\alpha \ket{\psi}\|^2_\sigma = \sum_{n=0}^\infty\sigma_n\tr{ \exp(\alpha(a^\dagger-a))\kebr{\psi}\exp(-\alpha(a^\dagger-a))\kebr{n}}
\end{equation*}
and setting $H= a^\dagger-a$ and $A = \kebr{\psi}$ in Equation~\eqref{eqn:bch}, we get
\begin{eqnarray}
\lefteqn{f_2(\ket{\psi}) }\nonumber\\ &=&\frac{1}{2}\sum_{n=0}^\infty\sigma_n\tr{[a^\dagger-a,[a^\dagger-a,\kebr{\psi}]\kebr{n}}\nonumber\\
&=&  \frac{1}{2}\sum_{n=0}^\infty \sigma_n\tr{\left(T^2\kebr{\psi}+ \kebr{\psi}T^2 - 2T\kebr{\psi} T\right)\kebr{n}}\label{eqn:der1}.
\end{eqnarray}
Where we have set $T = a^\dagger-a$. If we let  $\ket{\psi} = \sum_n c_n \ket{n}$, then 
\begin{eqnarray}
\lefteqn{\frac{1}{2}\tr{\left(T^2\kebr{\psi}+\kebr{\psi}T^2\right)\kebr{n}}}\nonumber\\
&=&\frac{1}{2}{\bra{n}T^2\ket{\psi}\iprod{\psi|n} + \bra{\psi}T^2\ket{n}\iprod{n,\psi}}\nonumber\\ 
&=& \Re{\sqrt{(n+1)(n+2)}c_{n+2}c_n^\ast + \sqrt{n(n-1)}c_{n-2}c_n^\ast} \nonumber\\&&- (2n+1)|c_n|^2\label{eqn:der2}
\end{eqnarray}
and
\begin{eqnarray}
\lefteqn{\tr{T\kebr{\psi}T\kebr{n}}}\nonumber\\&=& \bra{n}T\ket{\psi}\cdot \bra{\psi} T\ket{n}\nonumber\\
&=& \sqrt{n(n+1)}\Re{c_{n-1}c_{n+1}^\ast} -n|c_{n-1}|^2 - (n+1)|c_{n+1}|^2.\label{eqn:der3}
\end{eqnarray}
Substituting Equations~\eqref{eqn:der2} and~\eqref{eqn:der3} into~\eqref{eqn:der1} and rearranging terms, we get
\begin{eqnarray}
f_2(\ket{\psi}) &=& \sum_{n=0}^\infty |c_n|^2\big((n+1)\sigma_{n+1} + n\sigma_{n-1} - (2n+1)\sigma_n\big)\nonumber\\
&&+\: \sum_{n=0}^\infty \Re{c_{n-1}c_{n+1}^\ast}\sqrt{n(n+1)}(\sigma_{n-1} + \sigma_{n+1} - 2\sigma_n)\label{eqn:f2}
\end{eqnarray}

If $2\leq n\neq \bar{n}$ then substituting for $\sigma_n$ from Equation~\eqref{eqn:sigmaDef} we get
\begin{equation*}
(n+1)\sigma_{n+1} + n\sigma_{n-1} - (2n+1)\sigma_n = \frac{-1}{n(n+1)}
\end{equation*}
and for $n= 0,1$ we get 
\begin{equation*}
(n+1)\sigma_{n+1} + n\sigma_{n-1} - (2n+1)\sigma_n = \frac{-1}{4}
\end{equation*}
Substituting this into~\eqref{eqn:f2}, we have for $2\leq m\neq \bar{n}$,
\begin{eqnarray}
f_2(\ket{\psi}) &\leq& -\frac{|c_m|^2}{\kappa_m} +|c_{\bar{n}}|^2 ((\bar{n}+1)\sigma_{\bar{n}+1} + \bar{n}\sigma_{\bar{n}-1})\nonumber\\
&&+\: \sum_{n=0}^\infty \Re{c_{n-1}c_{n+1}^\ast}\sqrt{n(n+1)}(\sigma_{n-1} + \sigma_{n+1} - 2\sigma_n)\label{eqn:der4}
\end{eqnarray}
where 
\begin{equation*}
\kappa_m = \left\{\begin{array}{ll} {m(m+1)}&\textrm{if } 2\leq m\neq \bar{n,}\\ 4&\textrm{if } m = 1,2\end{array}\right.
\end{equation*}

For all $\ket{\psi} \in B'_\epsilon(\ket{m})$, the first term in Equation~\eqref{eqn:der4} will be less than $-3/(4\kappa_m)$ for $\epsilon$ small enough. We show that for $\epsilon$ small enough, for all $\ket{\psi}\in B'_\epsilon(\ket{m})$ such that $V(\ket{\psi}) \leq C$,
\begin{eqnarray}
\frac{1}{2\kappa_m} &\geq& |c_{\bar{n}}|^2 ((\bar{n}+1)\sigma_{\bar{n}+1} + \bar{n}\sigma_{\bar{n}-1})\nonumber\\ &&+ \left|\sum_{n=0}^\infty \Re{c_{n-1}c_{n+1}^\ast}\sqrt{n(n+1)}(\sigma_{n-1} + \sigma_{n+1} - 2\sigma_n)\right|\label{eqn:der5}
\end{eqnarray}
For all $\ket{\psi}\in B'_\epsilon(\ket{m})$ we have $|c_{\bar{n}}|^2 < \epsilon^2$ and hence we can choose a small enough $\epsilon$ such that
\begin{equation}\label{eqn:der5a}
 |c_{\bar{n}}|^2 ((\bar{n}+1)\sigma_{\bar{n}+1} + \bar{n}\sigma_{\bar{n}-1}) \leq\frac{1}{8\kappa_m}.
\end{equation}

We have
\begin{eqnarray*}
|\sqrt{n(n+1)}(\sigma_{n-1} + \sigma_{n+1} - 2\sigma_n)| &=& \sqrt{n(n+1)}\frac{n^2+3n+1}{n^2(n+1)^2}\\
&\triangleq& h(n)
\end{eqnarray*}
The function $h(n)$ defined in the above Equation  is of order $O(1/n)$. Because $\sigma(n)$ is order $O(\ln(n))$ and because the series $\sum_n|c_n|^2\sigma_n < C$ converges, we know that the series $\sum_n|c_n|^2h(n)$ converges. Hence, $\sum_{n=M}^\infty |c_n|^2h(n)$ can be made arbitrarily small for large $M$. By Cauchy-Schwartz, $\sum_n|c_n|^2 \geq \sum_n\Re{c_{n-1}c_{n+1}}$ and hence $M$ can be chosen large enough such that for all $\ket{\psi}$ satisfying $V(\ket{\psi}) \leq C$, we have
\begin{equation}\label{eqn:der5b}
\left|\sum_{n=M}^\infty \Re{c_{n-1}c_{n+1}^\ast}\sqrt{n(n+1)}(\sigma_{n-1} + \sigma_{n+1} - 2\sigma_n)\right|\leq \frac{1}{8\kappa_m}.
\end{equation}
Because  $\sum_n|c_n|^2 \geq \sum_n\Re{c_{n-1}c^\ast_{n+1}}$, we can choose $\epsilon$ small enough so that for all $\ket{\psi}\in B_\epsilon(\ket{m})$, 
\begin{equation}\label{eqn:der5c}
\left|\sum_{n=0}^{M-1} \Re{c_{n-1}c_{n+1}^\ast}\sqrt{n(n+1)}(\sigma_{n-1} + \sigma_{n+1} - 2\sigma_n)\right|\leq \frac{1}{4\kappa_m}.
\end{equation}
Equations~\eqref{eqn:der5a}, \eqref{eqn:der5b} and~\eqref{eqn:der5c} prove Equation~\eqref{eqn:der5}.  Substituting~\eqref{eqn:der5} into~\eqref{eqn:der4} we get the required bound,
\begin{equation*}
f_2(\ket{\psi}) \leq \gamma_m < 0
\end{equation*}
where $\gamma_m = -1/(4\kappa_m)$.
\end{proof}
We have the following Corollary to the above Lemma.
\begin{corollary}\label{cor:minStep} Given any $m\neq \bar{n}$ and $C > V(\ket{m})$, the constants $\bar{\alpha}$ and $\delta$ in Equations~\eqref{eqn:control} and~\eqref{eqn:lyap}, respectively, can be chosen to be small enough such that there exists an $\epsilon> 0$ and $c > 0$ such that 
if 
\begin{equation*}
\ket{\psi_k} \in \cW_\epsilon \triangleq B'_\epsilon(\ket{m})\cap \{\ket{\psi}:C \geq V(\ket{\psi}) > V(\ket{m}) - \epsilon\}
\end{equation*}
then
\begin{equation*}
V(\ket{\psi_{K+1}}) - V(\ket{\psi_k}) < -c,\textrm{ with probability 1.}
\end{equation*}
\end{corollary}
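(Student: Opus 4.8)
The plan is to follow one time step $\ket{\psi_k}\to\ket{\psi_{k+1/2}}\to\ket{\psi_{k+1}}$ through its two halves: bound the control half-step $\ket{\psi_{k+1/2}}\to\ket{\psi_{k+1}}$ from above using Lemma~\ref{lem:derBound}, and show that the measurement half-step $\ket{\psi_k}\to\ket{\psi_{k+1/2}}$ can raise $V$ by only an amount that is made arbitrarily small by shrinking $\epsilon$. Since $\alpha_k$ is, by~\eqref{eqn:control}, a minimiser of $\alpha\mapsto V(D_\alpha\ket{\psi_{k+1/2}})$ over $[-\bar\alpha,\bar\alpha]$, for the control half-step it is enough to produce one admissible value $\alpha^\ast$ that does the job. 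Assuming for the moment that $\ket{\psi_{k+1/2}}\in B'_{\bar\epsilon(\ket m)}(\ket m)$ and $V(\ket{\psi_{k+1/2}})\le C$, Lemma~\ref{lem:derBound} gives $V(D_\alpha\ket{\psi_{k+1/2}})-V(\ket{\psi_{k+1/2}}) = f_1\alpha+f_2\alpha^2+O(\bar\alpha^3)+O(\delta)$ for $|\alpha|\le\bar\alpha$, where $f_j=f_j(\ket{\psi_{k+1/2}})$, $f_2<\gamma_m<0$, and the error terms are uniform over $B'_{\bar\epsilon(\ket m)}(\ket m)\cap\cV_C$. Taking $\alpha^\ast=\bar\alpha$ if $f_1\le0$ and $\alpha^\ast=-\bar\alpha$ otherwise (so that $|\alpha^\ast|=\bar\alpha$ and $f_1\alpha^\ast\le0$) yields
\begin{equation*}
V(D_{\alpha^\ast}\ket{\psi_{k+1/2}})-V(\ket{\psi_{k+1/2}})\le\gamma_m\bar\alpha^2+O(\bar\alpha^3)+O(\delta).
\end{equation*}

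Now I fix the constants in order. Given $m$ and $C$, Lemma~\ref{lem:derBound} supplies $\gamma_m<0$ and the radius $\bar\epsilon(\ket m)$. Choose $\bar\alpha$ small enough (and no larger than the $\bar\alpha(C)$ of Lemma~\ref{lem:derBound}) that the uniform remainder obeys $O(\bar\alpha^3)\le\tfrac14|\gamma_m|\bar\alpha^2$; then choose $\delta$ small enough that $O(\delta)\le\tfrac14|\gamma_m|\bar\alpha^2$; and put $c:=\tfrac14|\gamma_m|\bar\alpha^2>0$. The previous display is then $\le-2c$, and since $\alpha_k$ is a minimiser,
\begin{equation*}
V(\ket{\psi_{k+1}})\le V(D_{\alpha^\ast}\ket{\psi_{k+1/2}})\le V(\ket{\psi_{k+1/2}})-2c
\end{equation*}
whenever $\ket{\psi_{k+1/2}}\in B'_{\bar\epsilon(\ket m)}(\ket m)$ and $V(\ket{\psi_{k+1/2}})\le C$. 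Hence the Corollary reduces to choosing $\epsilon\le\bar\epsilon(\ket m)$ so small that, for every $\ket{\psi_k}\in\cW_\epsilon$ and every measurement outcome $s\in\{g,e\}$, the state $\ket{\psi_{k+1/2}}=M_s\ket{\psi_k}/\|M_s\ket{\psi_k}\|$ satisfies (i) $\ket{\psi_{k+1/2}}\in B'_{\bar\epsilon(\ket m)}(\ket m)$ with $V(\ket{\psi_{k+1/2}})\le C$, and (ii) $V(\ket{\psi_{k+1/2}})-V(\ket{\psi_k})\le c$; for then $V(\ket{\psi_{k+1}})-V(\ket{\psi_k})\le-c$.

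To establish (i)--(ii) I would use that, by assumption A1, the Fock states are the only eigenvectors of $M_g$ and $M_e$, so each $M_s\ket m$ is a scalar multiple of $\ket m$. Hence for $\ket{\psi}\in B'_\epsilon(\ket m)$ the norm $\|M_s\ket{\psi}\|^2$ is bounded below (essentially by $\cos^2\phi_m$, resp.\ $\sin^2\phi_m$), the normalised vector $M_s\ket{\psi}/\|M_s\ket{\psi}\|$ lies within some $\epsilon_1(\epsilon)\downarrow0$ of $\ket m$, and renormalisation is a uniformly bounded operation; this gives the $\bar{B}_1$-part of (i), provided $\epsilon$ is small enough that $\epsilon_1(\epsilon)\le\bar\epsilon(\ket m)$. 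For the $V$-estimates, write $V=\sum_n\sigma_n|\iprod{\psi|n}|^2+V_2$ with $|V_2|=O(\delta)$, note that $M_s$ multiplies the $n$-th coefficient of $\ket{\psi_k}$ by $\cos\phi_n$ (resp.\ $\sin\phi_n$) and then divides by $\|M_s\ket{\psi_k}\|$, and use the two-sided bound defining $\cW_\epsilon$ — namely $V(\ket{\psi_k})>V(\ket m)-\epsilon$ together with closeness of $\ket{\psi_k}$ to $\ket m$ — to keep $\sum_{n\neq m}\sigma_n|\iprod{\psi_k|n}|^2$ small, so that the change of $\sum_n\sigma_n|\iprod{\cdot|n}|^2$ across the normalised measurement is $O(\epsilon)+O(\delta)$. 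Shrinking $\epsilon$ then makes this $\le c$, completing the proof.

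The hard part is precisely this last estimate. A vector that is close to $\ket m$ in $\bar{B}_1$ can still carry a minute amount of probability at a very large photon number $n$, where $\sigma_n$ is large, and the renormalised action of $M_s$ amplifies exactly those components with $\cos^2\phi_n$ (resp.\ $\sin^2\phi_n$) larger than $\|M_s\ket{\psi_k}\|^2$; this is why the hypothesis couples closeness in $\bar{B}_1$ with the Lyapunov bounds, and bounding $\sum_n\sigma_n|\iprod{\cdot|n}|^2$ rigorously through the renormalisation — not merely keeping it finite — is where the bulk of the work lies. One also has to handle separately the at-most-one Fock index $m$ with $\cos\phi_m=0$ or $\sin\phi_m=0$ (by A1 there is at most one of each), where one measurement outcome is degenerate and $\|M_s\ket{\psi}\|$ is no longer bounded away from $0$ near $\ket m$.
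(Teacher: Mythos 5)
Your treatment of the control half-step is exactly the paper's: apply Lemma~\ref{lem:derBound} to $\ket{\psi_{k+1/2}}$, pick the sign of $\alpha^\ast=\pm\bar\alpha$ so that $f_1\alpha^\ast\le 0$, absorb the $O(\bar\alpha^3)$ and $O(\delta)$ remainders by first fixing $\bar\alpha$ and then $\delta$, and invoke minimality of $\alpha_k$ in~\eqref{eqn:control}. That part is fine. Where your proposal differs is that you correctly observe the Corollary compares $V(\ket{\psi_{k+1}})$ with $V(\ket{\psi_k})$, not with $V(\ket{\psi_{k+1/2}})$, and you therefore reduce everything to your conditions (i)--(ii) on the measurement half-step. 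But you do not prove (i)--(ii): you sketch them, claim the half-step change of $\sum_n\sigma_n|\iprod{\psi|n}|^2$ is $O(\epsilon)+O(\delta)$, and then concede in your last paragraph that making this rigorous is ``where the bulk of the work lies.'' That concession is the gap, and it is not a cosmetic one, because the sketched argument cannot work as stated: membership in $\cW_\epsilon$ only gives $V(\ket{\psi_k})\le C$ from above and $V(\ket{\psi_k})>V(\ket m)-\epsilon$ from below, so the off-$m$ mass measured in the $\sigma$-weighted norm is bounded only by roughly $C-\sigma_m$, not by anything that shrinks with $\epsilon$. Concretely, take $\ket{\psi_k}=\sqrt{1-\eta}\,\ket m+\sqrt{\eta}\,\ket M$ with $\eta<\epsilon^2$ tiny and $M$ so large that $\eta\,\sigma_M\approx C-\sigma_m$, choosing $M$ (possible for infinitely many $M$ under A1) with $\cos^2(\phi_M)$ appreciably larger than $\cos^2(\phi_m)$. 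For the outcome $s=g$, which occurs with probability close to $\cos^2(\phi_m)$, the weight of $\ket M$ is multiplied by $\cos^2(\phi_M)/\|M_g\ket{\psi_k}\|^2>1$, so $V(\ket{\psi_{k+1/2}})-V(\ket{\psi_k})$ is of order $C-V(\ket m)$ (and $V(\ket{\psi_{k+1/2}})$ can exceed $C$), while the subsequent control step can only change $V$ by $O(\bar\alpha^2)$ there. So your claimed (ii) fails, and no choice of ``$\epsilon$ small'' rescues it.

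To be fair, you have put your finger on precisely the point the paper itself passes over: the paper's proof asserts that $\ket{\psi_k}\in\cW_\epsilon$ implies $\ket{\psi_{k+1/2}}\in\cW_{\epsilon'}$ ``with probability 1,'' justifying the bound $V(\ket{\psi_{k+1/2}})<C$ by appeal to the computation behind Lemma~\ref{lem:supMart} --- but that computation controls only the conditional expectation of $V$ across the measurement, not its value for each outcome, and the paper never compares $V(\ket{\psi_{k+1/2}})$ with $V(\ket{\psi_k})$ at all. So your reduction is more honest than the paper's write-up, but as a proof it stops exactly at the step that needs a genuinely new argument (e.g.\ a pointwise, outcome-by-outcome control of the $\sigma$-weighted tail through the renormalised measurement, or a reformulation of the Corollary in expectation/with high probability rather than with probability~1). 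As it stands, the proposal is incomplete at its crux and the sketched route to close it is unsound.
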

\begin{proof} 
Let $\bar{\epsilon}$ be as in Lemma~\ref{lem:derBound} and choose $\epsilon'\in (0,\bar{\epsilon})$. Because $M_g$ and $M_e$ are bounded operators on $\cH$ and $M_g\ket{m}= \ket{m}$ and $M_e\ket{m} = \ket{m}$, $\epsilon$ can be chosen small enough such that if $\ket{\psi_k}\in B'_\epsilon(\ket{m})$ then $\ket{\psi_{k+1/2}} \in B'_{\epsilon'}(\ket{m})$. Moreover, we can show using the same argument used to prove Lemma~\ref{lem:supMart} below, we know that if $V(\ket{\psi_k}) < C$ then $V(\ket{\psi_{k+1/2}}) < C$. Therefore, if $\ket{\psi_{k}}\in \cW_{\epsilon}$ then $\ket{\psi_{k+1/2}} \in \cW_{\epsilon'}$ with probability $1$. 

Now we choose $\bar{\alpha}$ small enough such that the term of order $O(\bar{\alpha}^3)$ in Equation~\eqref{eqn:diff} is smaller than $\frac{\gamma_m\bar{\alpha}^2}{2}$. Then from Lemma~\ref{lem:derBound} we know that for all $\ket{\psi}\in \cW_{\epsilon'}$ and for either $\alpha = +\bar{\alpha}$ or $\alpha = -\bar{\alpha}$, we have
\begin{equation*}
V(D_\alpha\ket{\psi}) - V(\ket{\psi}) \leq -\frac{\gamma_m\bar{\alpha}^2}{2}.
\end{equation*}
We can choose $\delta$ small enough such that the $O(\delta)$ term in Lemma~\ref{lem:derBound} is smaller than $\frac{\gamma_m\bar{\alpha}^2}{4}$. Therefore, for all $\ket{\psi_k}\in \cW_\epsilon$ the statement of the Corollary is satisfied by choosing $c = \frac{\gamma_m\bar{\alpha}^2}{4}$.
\end{proof}
\subsection{Proof of Convergence}
We wish to establish that $V(\ket{\psi_k})$ is super-martingale. We have,
\begin{eqnarray}
\lefteqn{\bbE\left[V(\ket{\psi_{k+1}})\big|\ket{\psi_k}= \ket{\psi}\right] - V(\ket{\psi})} \nonumber \\
&=& \bbE\left[V\left(D_{\alpha_k}\left(\ket{\psi_{k+1/2}}\right)\right)\big|\ket{\psi_k} = \ket{\psi}\right]  - V(\ket{\psi})\nonumber\\
&=& \bbE\left[\argmin_{\alpha\in [-\bar{\alpha},\bar{\alpha}]} V\left(D_{\alpha}\left(\ket{\psi_{k+1/2}}\right)\right)\bigg|\ket{\psi_k} = \ket{\psi}\right]　 - V(\ket{\psi})\nonumber\\
&=& K_1(\ket{\psi}) + K_2(\ket{\psi})\label{eqn:condExeV}
\end{eqnarray}
where we set 
\begin{eqnarray}
K_1(\ket{\psi}) &\triangleq& \bbE\left[\argmin_{\alpha\in [-\bar{\alpha},\bar{\alpha}]} V\left(D_{\alpha}\left(\ket{\psi_{k+1/2}}\right)\right)\big|\ket{\psi_k} = \ket{\psi}\right] \nonumber\\
&& -\: \bbE\left[V(\ket{D_0(\psi_{k+1/2}})\big|\ket{\psi_k}= \ket{\psi}\right],\label{eqn:k1}\\
K_2(\ket{\psi}) &\triangleq&  \bbE\left[V\left(D_{0}\left(\ket{\psi_{k+1/2}}\right)\right)\big|\ket{\psi_k} = \ket{\psi}\right] - V(\ket{\psi})\nonumber\\
&=&  \bbE\left[V\left(\ket{\psi_{k+1/2}}\right)\big|\ket{\psi_k} = \ket{\psi}\right] - V(\ket{\psi}) \label{eqn:k2}
\end{eqnarray}
Obviously $K_1(\ket{\psi})$ is non-positive for all $\ket{\psi}$. In order to calculate $K_2(\ket{\psi})$ we set $\rho = \rho_\psi = \kebr{\psi}$ and we note that $M_g$ and $M_e$ commute and satisfy $M_g^2 + M_e^2 = \bbI$, where $\bbI$ is the identity operator.
Therefore,
\begin{eqnarray*}
\|M_e\ket{\psi}\|^4 &=& \tr{M_e^2\rho}^2 \\
&=& \left(\tr{(\bbI- M_g^2)\rho}\right)^2 \\
&=& \left(1 - \tr{M_g^2\rho}\right)^2 \\
&=& 1 - 2\tr{M_g^2\rho} + \tr{M_g^2\rho}^2\\
&=& 1 - 2\|M_g\ket{\psi}\|^2 + \|M_g\ket{\psi}\|^4
\end{eqnarray*}

But, 
\begin{eqnarray*}
\lefteqn{\bbE \left[\|M_g\ket{\psi_{k+1/2}}\|^2 \big| \ket{\psi_k} = \psi\right]}\\
&=& \|M_g\ket{\psi}\|^2 \left\|M_g\frac{M_g\ket{\psi}}{\|M_g\ket{\psi}\|}\right\|^2 + \|M_e\ket{\psi}\|^2 \left\|M_g\frac{M_e\ket{\psi}}{\|M_e\ket{\psi}\|}\right\|^2\\
&=& \tr{M_g^4\rho} + \tr{M_g^2 M_e^2 \rho}\\
&=& \tr{M_g^2\rho}\\
&=& \|M_g\ket{\psi_k}\|^2.
\end{eqnarray*}
Similarly, we get for all $\ket{n}$,
\begin{equation*}
\bbE[|\iprod{n,{\psi_{k+1/2}}}|^2\big|\psi_k = \ket{\psi}] = |\iprod{n,{\psi_{k}}}|^2
\end{equation*}

Therefore, 
\begin{equation*}
K_2(\ket{\psi}) = 2\left(\|M_g\ket{\psi}\|^4 - \bbE\left[\|M_g\ket{\psi_{k+1/2}}\|^4 \big| \ket{\psi_{k}} = \ket{\psi}\right]\right)
\end{equation*}
We have,
\begin{eqnarray*}
\lefteqn{\bbE\left[\|M_g\ket{\psi_{k+1/2}}\|^4 \big| \ket{\psi_{k}} = \ket{\psi}\right]}\\
&=& \tr{M_g^2\rho}\tr{M_g^2 \frac{M_g\rho M_g}{\tr{M_g^2\rho}}}^2 + \tr{M_e^2\rho}\tr{M_g^2 \frac{M_e\rho M_e}{\tr{M_e^2\rho}}}^2\\
&=& \frac{\tr{M_g^4\rho}^2}{\tr{M_g^2\rho}} + \frac{\tr{M_g^2M_e^2\rho}^2}{\tr{M_e^2\rho}}\\
&=& \frac{\tr{M_g^4\rho}^2 - \tr{M_g^2\rho}\tr{M_g^4\rho}^2 + \tr{M_g^2\rho}\left(\tr{M_g^2\rho} - \tr{M_g^4\rho}\right)^2}{\tr{M_g^2\rho}\tr{M_e^2\rho}} \\
&=&\frac{\tr{M_g^4\rho}^2 + \tr{M_g^2\rho}^3 - 2\tr{M_g^2\rho}^2\tr{M_g^4\rho}}{\tr{M_g^2\rho}\tr{M_e^2\rho}} 
\end{eqnarray*}

Therefore,
\begin{eqnarray}
K_2(\ket{\psi}) &=& 2\left(\|M_g\ket{\psi}\|^4 - \bbE\left[\|M_g\ket{\psi_{k+1}}\|^4 \big| \ket{\psi_{k}} = \ket{\psi}\right]\right)\nonumber\\
&=& 2\left(\tr{M_g^2\rho}^2 - \frac{\tr{M_g^4\rho}^2 + \tr{M_g^2\rho}^3 - 2\tr{M_g^2\rho}^2\tr{M_g^4\rho}}{\tr{M_g^2\rho}\tr{M_e^2\rho}} \right)\nonumber\\
&=& \frac{-2\left(\tr{M_g^4\rho} - \tr{M_g^2\rho}^2\right)^2}{\tr{M_g^2\rho}\tr{M_e^2\rho}}\label{eqn:K2Neg}
\end{eqnarray}

The following Lemma is a consequence of the above calculation.
\begin{lemma}\label{lem:supMart} $V(\ket{\psi_k})$ is a super-martingale and satisfies
\begin{equation*}
\bbE\left[V(\ket{\psi_{k+1}})\big|\ket{\psi_k}= \ket{\psi}\right] - V(\ket{\psi})  \leq -2\frac{\left(\tr{M_g^4\rho} - \left(\tr{M_g^2\rho}\right)^2\right)^2}{\tr{M_g^2\rho}\tr{M_e^2\rho}}.
\end{equation*}
Here $\rho = \ket{\psi}\bra{\psi}$.
\begin{proof}
Firstly from the lower semi-continuity of $V$ (Lemma~\ref{lem:VlowSem}) we know that $V$ is measurable function of $\ket{\psi}$. Therefore $\bbE_\mu[V]$ is well defined. The super-martingale property follows from Equations~\eqref{eqn:condExeV}, \eqref{eqn:K2Neg} and the fact that $K_1(\ket{\psi})$ is not positive because $0\in [-\bar{\alpha},\bar{\alpha}]$.
\end{proof}
\end{lemma}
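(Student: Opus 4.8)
The plan is to assemble the one-step decomposition already derived in the computation preceding the lemma. Recall from \eqref{eqn:condExeV} that, writing $\rho = \kebr{\psi}$,
\[
\bbE\left[V(\ket{\psi_{k+1}})\,\big|\,\ket{\psi_k}=\ket{\psi}\right] - V(\ket{\psi}) = K_1(\ket{\psi}) + K_2(\ket{\psi}),
\]
with $K_1$ the control contribution \eqref{eqn:k1} and $K_2$ the measurement contribution \eqref{eqn:k2}. The lemma follows once I check (i) that $V$ is measurable so the conditional expectation makes sense, (ii) that $K_1\le 0$, and (iii) the explicit formula \eqref{eqn:K2Neg} for $K_2$, which is visibly non-positive.

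For (i), I would note that $V$ is lower semi-continuous by Lemma~\ref{lem:VlowSem}, hence Borel measurable, so $\bbE_\mu[V]$ and all the conditional expectations above are well defined (with values in $[0,\infty]$). For (ii), the feedback \eqref{eqn:control} selects $\alpha_k$ minimizing $\alpha\mapsto V(D_\alpha\ket{\psi_{k+1/2}})$ over $[-\bar{\alpha},\bar{\alpha}]$; since $0$ lies in this interval, for each of the two measurement outcomes $s\in\{g,e\}$ one has $V(D_{\alpha_k}\ket{\psi_{k+1/2}}) \le V(D_0\ket{\psi_{k+1/2}}) = V(\ket{\psi_{k+1/2}})$ pointwise, and averaging over the outcomes with probabilities $P_g,P_e$ gives $K_1(\ket{\psi})\le 0$. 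For (iii), I would simply invoke the calculation already performed — using that $M_g,M_e$ commute, $M_g^2+M_e^2=\bbI$, and that both $|\iprod{n|\psi_{k+1/2}}|^2$ and $\|M_g\ket{\psi_{k+1/2}}\|^2$ are preserved in conditional mean — which collapses $K_2$ to the right-hand side of \eqref{eqn:K2Neg}.

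Combining the three points, $\bbE[V(\ket{\psi_{k+1}})\,|\,\ket{\psi_k}=\ket{\psi}] - V(\ket{\psi}) \le K_2(\ket{\psi}) = -2(\tr{M_g^4\rho} - (\tr{M_g^2\rho})^2)^2/(\tr{M_g^2\rho}\tr{M_e^2\rho}) \le 0$, which is exactly the stated bound and in particular exhibits $V(\ket{\psi_k})$ as a super-martingale. There is no real obstacle remaining: the substantive work — the algebraic evaluation of $K_2$ via the commutation relations and the conservation identities for $|\iprod{n|\cdot}|^2$ and $\|M_g\,\cdot\|^2$ — has been carried out before the lemma, and the two delicate points (measurability/finiteness of the unbounded $V$, and the admissibility of $\alpha=0$ as a control) are dispatched by Lemma~\ref{lem:VlowSem} and the form of \eqref{eqn:control} respectively.
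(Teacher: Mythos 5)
Your proposal is correct and follows essentially the same route as the paper: measurability of $V$ from Lemma~\ref{lem:VlowSem}, the decomposition~\eqref{eqn:condExeV} with $K_1\le 0$ because $0\in[-\bar{\alpha},\bar{\alpha}]$, and the explicit non-positive expression~\eqref{eqn:K2Neg} for $K_2$. Your write-up merely spells out the pointwise-then-average argument for $K_1\le 0$ a bit more explicitly than the paper does.
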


\begin{lemma}\label{lem:convSub}
If $\bbE_\mu[V] < \infty$ then the sequence $\Gamma_n(\mu)$ has a (weak-$\ast$) converging subsequence. 
\end{lemma}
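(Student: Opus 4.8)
The plan is to show that the sequence $\{\Gamma_n(\mu)\}_{n\geq 0}$ is tight and then to invoke Prohorov's Theorem~\ref{the:prohorov}. Tightness will come from combining the super-martingale property of $V(\ket{\psi_k})$ established in Lemma~\ref{lem:supMart} with Doob's inequality~\eqref{eqn:doob} and the compactness of the sublevel sets of $V$ from Lemma~\ref{lem:comp}.

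First I would note that $V$ is non-negative on $\bar{B}_1$ and that $V(\ket{\psi_k})$ is a non-negative super-martingale, so for each $\ket{\psi}\in\cD(V)$ and each $\gamma>0$ Doob's inequality gives
\begin{equation*}
\bbP\left(\sup_{k\geq 0} V(\ket{\psi_k}) \geq \gamma \,\Big|\, \ket{\psi_0}=\ket{\psi}\right) \leq \frac{V(\ket{\psi})}{\gamma}.
\end{equation*}
The map $\ket{\psi}\mapsto \bbP(\sup_k V(\ket{\psi_k})\geq\gamma\mid\ket{\psi_0}=\ket{\psi})$ is $\sB$-measurable (as is $\sup_k V(\ket{\psi_k})$ itself), because $V$ is measurable by the lower semi-continuity of Lemma~\ref{lem:VlowSem}. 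Since $\bbE_\mu[V]<\infty$, we have $V(\ket{\psi_0})<\infty$ $\mu$-almost surely, i.e. $\ket{\psi_0}\in\cD(V)$ with probability one, so integrating the displayed bound against $\mu$ over this full-measure set yields
\begin{equation*}
\bbP\left(\sup_{k\geq 0} V(\ket{\psi_k}) \geq \gamma\right) \leq \frac{\bbE_\mu[V]}{\gamma}.
\end{equation*}

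Next, given $\epsilon>0$ I would fix $\gamma=\gamma(\epsilon)$ with $\gamma > \bbE_\mu[V]/\epsilon$, so the right-hand side is $<\epsilon$. By Lemma~\ref{lem:comp} the set $\cV_\gamma=\{\ket{\psi}\in\bar{B}_1 : V(\ket{\psi})\leq\gamma\}$ is a compact subset of $\bar{B}_1$. Since $\{V(\ket{\psi_k})>\gamma\}\subseteq\{\sup_{j\geq 0} V(\ket{\psi_j})\geq\gamma\}$, for every $k$,
\begin{equation*}
\Gamma_k(\mu)\big(\bar{B}_1\setminus\cV_\gamma\big) = \bbP\big(V(\ket{\psi_k})>\gamma\big) \leq \bbP\left(\sup_{j\geq 0} V(\ket{\psi_j})\geq\gamma\right) < \epsilon.
\end{equation*}
Because $\epsilon>0$ was arbitrary and $\cV_\gamma$ is compact, this is exactly the statement that $\{\Gamma_n(\mu)\}_n$ is tight. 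Applying Prohorov's Theorem~\ref{the:prohorov} to this tight sequence produces a weak-$\ast$ convergent subsequence, which is the conclusion of the Lemma.

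This argument is short and essentially bookkeeping; there is no serious analytic obstacle. The points that require the most care are: (i) checking that $\sup_k V(\ket{\psi_k})$ is a bona fide measurable random variable, which I get from the measurability of $V$ via Lemma~\ref{lem:VlowSem}; and (ii) making precise that the pointwise (conditional) Doob bound may be integrated against $\mu$, using the $\mu$-a.s. finiteness of $V(\ket{\psi_0})$ to stay inside $\cD(V)$ on a set of full measure. Everything else is a direct appeal to Lemmas~\ref{lem:supMart} and~\ref{lem:comp} and to Theorem~\ref{the:prohorov}.
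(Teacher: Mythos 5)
Your proposal is correct and follows essentially the same route as the paper: the super-martingale property of $V(\ket{\psi_k})$ (Lemma~\ref{lem:supMart}) together with Doob's inequality~\eqref{eqn:doob} bounds the mass outside the compact sublevel set of $V$ given by Lemma~\ref{lem:comp}, yielding tightness, and Prohorov's Theorem~\ref{the:prohorov} then gives the convergent subsequence. The only difference is cosmetic: you apply Doob to $\sup_k V(\ket{\psi_k})$ and integrate the conditional bound against $\mu$, while the paper bounds each marginal $\Gamma_n(\mu)$ directly via $\bbE_{\Gamma_n(\mu)}[V]\leq\bbE_\mu[V]$.
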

\begin{proof}
We prove that the sequence of measures $\Gamma_n(\mu)$ is tight. 

Let $\epsilon > 0$ be given. We need to show that there exits a compact set $K_\epsilon$ such that $[\Gamma_n(\mu)](K_\epsilon) \geq 1-\epsilon$ for all $n$. By Lemma~\ref{lem:comp}, the set 
\begin{equation*}
\cV = \left\{\ket{\psi}\in \bar{B}:V(\ket{\psi}) \leq  \frac{\bbE_\mu[V]}{\epsilon}\right\}
\end{equation*}
is compact. We prove that $[\Gamma_n(\mu)](\cV) \geq 1-\epsilon$. 

Because $V(\ket{\psi_k})$ is a super-martingale, we have 
\begin{equation*}
\bbE_{\Gamma_n(\mu)}[V] \leq \bbE_{\mu}[V].
\end{equation*}
Hence, by applying Doob's inequality, the probability that $V(\ket{\psi_n}) > \bbE_\mu[V]/{\epsilon}$ is
\begin{equation*}
[\Gamma_n(\mu)](\bar{B}_1\setminus \cV) < \epsilon.
\end{equation*}

Hence, the sequence $\Gamma_n(\mu)$ is tight and by Prohorov's Theorem~\ref{the:prohorov} has a (weak-$\ast$) converging subsequence. 
\end{proof}

Let $\Omega$ denote the limit set of $\Gamma_n(\mu)$. i.e.
\begin{equation}\label{eqn:omega}
\Omega = \{\mu_\infty\in \cM_1: \Gamma_{n_m}(\mu)\wto \mu,\textrm{ for some subsequence $\Gamma_{n_m}$ of $\Gamma_n$}\}.
\end{equation}

\begin{lemma}\label{lem:supp} Suppose assumption $A1$ is true. If $\ket{\psi}$ is not a Fock state $\ket{n}$ for some $n\in \bbZ_0^+$ then $\ket{\psi}\notin \supp(\mu_\infty)$ for all $\mu_\infty\in \Omega$.
\end{lemma}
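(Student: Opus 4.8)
The plan is to exploit the strict decrease of the ``second part'' $V_2(\ket{\psi}) = \delta\big(\cos^4(\phi_{\bar n}) + \sin^4(\phi_{\bar n}) - \|M_g\ket{\psi}\|^4 - \|M_e\ket{\psi}\|^4\big)$ of the Lyapunov function, whose per-step expected drop was computed explicitly in Equation~\eqref{eqn:K2Neg}: writing $g(\ket{\psi}) \triangleq \tr{M_g^4\rho_\psi} - (\tr{M_g^2\rho_\psi})^2$, the drop from the measurement step equals $-2\delta\, g(\ket{\psi})^2 / (\tr{M_g^2\rho_\psi}\tr{M_e^2\rho_\psi})$, while the control step leaves $V_2$ invariant since $D_\alpha$ is unitary and $M_g,M_e$ commute with... no — more carefully, the control step changes $V_2$, but $K_1 \le 0$ absorbs that, so the combined step still decreases $\bbE[V]$ by at least the above quantity (this is exactly Lemma~\ref{lem:supMart}). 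The quantity $g(\ket{\psi})$ is the variance of the random variable $\cos^2(\phi_n)$ under the probability distribution $|\iprod{\psi|n}|^2$; hence $g(\ket{\psi}) = 0$ iff $\cos^2(\phi_n)$ is a.s.\ constant on $\supp$, and by assumption A1 (non-degeneracy, equivalently $\pi/\phi$ irrational) this forces $\ket{\psi}$ to be a single Fock state $\ket{n}$. So $g$ vanishes precisely on the Fock states.

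First I would set up the contradiction: suppose $\ket{\psi_\ast} \in \supp(\mu_\infty)$ is not a Fock state, for some $\mu_\infty \in \Omega$, say $\Gamma_{n_m}(\mu) \wto \mu_\infty$. By lower semi-continuity of $g$'s denominator and continuity of $g$ (it is built from the bounded operators $M_g,M_e$, so $\ket{\psi}\mapsto \tr{M_g^2\rho_\psi}$ etc.\ are continuous on $\bar B_1$), there is an open neighbourhood $B'_\eta(\ket{\psi_\ast})$ and a constant $\beta > 0$ such that $2\delta\, g(\ket{\psi})^2/(\tr{M_g^2\rho_\psi}\tr{M_e^2\rho_\psi}) \ge \beta$ for all $\ket{\psi}$ in that neighbourhood. (One must check $\tr{M_e^2\rho_\psi}$ stays bounded away from $0$ near $\ket{\psi_\ast}$; if $\tr{M_e^2\rho_\psi}\to 0$ then $\ket{\psi}$ approaches the kernel of $M_e$, which under A1 contains at most finitely many Fock directions with $\phi_n \in \pi\bbZ$, and near a non-Fock state this cannot happen — or one handles the two operators symmetrically, since $g$ is the same variance whichever of $M_g,M_e$ one uses.) By the Portmanteau-type lower-semicontinuity of open sets under weak-$\ast$ convergence, $\mu_\infty(B'_\eta(\ket{\psi_\ast})) > 0$, so $c \triangleq \liminf_m \Gamma_{n_m}(\mu)\big(B'_{\eta}(\ket{\psi_\ast})\big) \ge \mu_\infty(B'_{\eta/2}(\ket{\psi_\ast})) > 0$... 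I would instead directly use $\mu_\infty$ itself once I pass to the limit, as below.

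The core estimate: summing the super-martingale inequality of Lemma~\ref{lem:supMart} over $k = 0,\dots,N-1$ and taking expectation over the initial law $\mu$ gives
\begin{equation*}
\bbE_\mu[V] \ge \bbE_\mu[V] - \bbE_{\Gamma_N(\mu)}[V] \ge \sum_{k=0}^{N-1} \bbE\Big[\, 2\delta\, \tfrac{g(\ket{\psi_k})^2}{\tr{M_g^2\rho_{\psi_k}}\tr{M_e^2\rho_{\psi_k}}}\,\Big] \ge \sum_{k=0}^{N-1} \beta\,[\Gamma_k(\mu)]\big(B'_\eta(\ket{\psi_\ast})\big),
\end{equation*}
using $V \ge 0$. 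Since the left side is the fixed finite number $\bbE_\mu[V]$, the series $\sum_k [\Gamma_k(\mu)](B'_\eta(\ket{\psi_\ast}))$ converges, hence $[\Gamma_k(\mu)](B'_\eta(\ket{\psi_\ast})) \to 0$ as $k\to\infty$. In particular along the convergent subsequence $[\Gamma_{n_m}(\mu)](B'_\eta(\ket{\psi_\ast})) \to 0$. But $B'_\eta(\ket{\psi_\ast})$ is open, so weak-$\ast$ convergence $\Gamma_{n_m}(\mu)\wto\mu_\infty$ gives $\mu_\infty(B'_\eta(\ket{\psi_\ast})) \le \liminf_m [\Gamma_{n_m}(\mu)](B'_\eta(\ket{\psi_\ast})) = 0$, contradicting $\ket{\psi_\ast}\in\supp(\mu_\infty)$. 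Therefore every point of $\supp(\mu_\infty)$ is a Fock state.

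\textbf{Main obstacle.} The delicate point is the uniform lower bound $\beta$ near $\ket{\psi_\ast}$: one needs both that $g$ is bounded below near a non-Fock state (which uses A1 to rule out that the support of $|\iprod{\psi|n}|^2$ collapses to a single Fock value of $\cos^2\phi_n$, i.e.\ the irrationality of $\pi/\phi$ makes the map $n\mapsto\cos^2(\phi_n)$ injective) and that the denominator $\tr{M_g^2\rho_\psi}\tr{M_e^2\rho_\psi}$ stays bounded above (trivial, it is $\le 1$) and the numerator's vanishing set is exactly the Fock states. A secondary technical nuisance is interchanging the infinite sum / expectation with the (possibly infinite, but bounded-in-expectation) values of $V$; this is handled by monotone/Fatou arguments together with $V\ge 0$, exactly as in Lemma~\ref{lem:convSub}. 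Everything else is bookkeeping with the weak-$\ast$ Portmanteau inequality for open sets.
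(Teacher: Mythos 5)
Your proposal is correct, and it rests on the same key fact as the paper's proof --- the decomposition of the expected drift into $K_1+K_2$ with $K_1\le 0$ (since $\alpha=0$ is admissible) and $K_2\le 0$ vanishing, under A1, exactly on the Fock states --- but the passage from that fact to the statement about $\supp(\mu_\infty)$ is genuinely different. The paper argues through the limit measure: $\bbE_{\Gamma_k(\mu)}[V]$ is non-increasing and bounded below, so $\bbE_{\Gamma_k(\mu)}[K_2]\to 0$; it then treats $K_2$ as an element of $\cC(\bar{B}_1)$ so that weak-$\ast$ convergence gives $\bbE_{\mu_\infty}[K_2]=0$, and concludes that no neighborhood avoiding the Fock states can carry positive $\mu_\infty$-mass. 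You instead sum the drift inequality to get $\sum_k \bbE_{\Gamma_k(\mu)}[-K_2]\le \bbE_\mu[V]<\infty$, bound $-K_2\ge \beta>0$ on a small ball $B'_\eta(\ket{\psi_\ast})$ around the putative non-Fock support point (continuity of $\ket{\psi}\mapsto\tr{M_g^2\rho_\psi}$ and $\tr{M_g^4\rho_\psi}$, the variance interpretation of the numerator, and A1 to make $n\mapsto\cos^2(\phi_n)$ injective), deduce $[\Gamma_k(\mu)](B'_\eta(\ket{\psi_\ast}))\to 0$, and finish with the Portmanteau inequality for open sets --- the same inequality the paper itself invokes in Lemma~\ref{lem:probFock}. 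Your route buys two things: you never need $K_2$ to be continuous (or even defined by the formula~\eqref{eqn:K2Neg}) on all of $\bar{B}_1$, which is the one delicate point the paper glosses over, since that expression is $0/0$ at unit vectors in the kernel of $M_g$ or $M_e$, whereas near a non-Fock state both $\tr{M_g^2\rho_\psi}$ and $\tr{M_e^2\rho_\psi}$ stay bounded away from zero (and the denominator is in any case at most $1/4$); and the summability argument gives the slightly stronger conclusion that the mass of a neighborhood of any non-Fock state tends to zero along the whole sequence, not just the convergent subsequence. The paper's route is shorter and produces the reusable identity $\bbE_{\mu_\infty}[K_2]=0$. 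Two caveats you share with the paper rather than introduce: both arguments implicitly use $\bbE_\mu[V]<\infty$ (which holds under the theorem's hypothesis $\bbE_\mu[V]\le C$ defining the flow whose limit set is $\Omega$), and both identify ``Fock state'' with the ray $e^{i\vartheta}\ket{n}$, since the vanishing-variance criterion determines $\ket{\psi}$ only up to a global phase; finally, your restoration of the factor $\delta$ in $K_2$ is correct (the paper's Equation~\eqref{eqn:K2Neg} drops it) and is immaterial to either argument.
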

\begin{proof}
Suppose $\Gamma_{k_p}[\mu]\wto \mu_\infty$. We know from Equation~\eqref{eqn:condExeV} and Lemma~\ref{lem:supMart} that $K_1(\ket{\psi_k}) + K_2(\ket{\psi_k}) \to 0$ as $k\to \infty$. Because both $K_1$ and $K_2$ are non-positive, we need $K_2(\ket{\psi_k}) \to 0$ as $k\to \infty$. Therefore, for all initial distributions $\mu$
\begin{equation*}
\lim_{p\to\infty} \bbE_{\Gamma_{k_p}(\mu)} [K_2]  = 0.
\end{equation*}

Also note that the function $K_2$ in Equation~\eqref{eqn:K2Neg} is a continuous function of $\ket{\psi}\in \bar{B}_1$ with respect to the topology inherited from $\cH$. Therefore, by the definition of (weak-$\ast$) convergence of measures~\ref{defn:convMeas}, we have 
\begin{equation}\label{eqn:lemSupCont}
\bbE_{\mu_\infty} [K_2] = \lim_{p\to\infty} \bbE_{\Gamma_{k_p}(\mu)} [K_2]  = 0.
\end{equation}

But from Equation~\eqref{eqn:K2Neg}, we know that if $K_2(\ket{\psi}) = 0$ then 
\begin{equation*}
\tr{M_g^4\rho} = \tr{M_g^2\rho}^2.
\end{equation*}
But the Cauchy-Schwartz inequality implies $\tr{M_g^4\rho} = \tr{M_g^4\rho}\tr{\rho} \geq \tr{M_g^2\rho}^2$ with equality if and only if $M_g^4\rho$ and $\rho$ are co-linear. That is if and only if $\rho$ is a projection over the eigenstate of $M_g^4$. Therefore, by assumption $A1$, $K_2(\ket{\psi}) \leq 0$ with equality if and only if $\ket{\psi}$ is a Fock state. 

Now if $\ket{\psi}$ is not a Fock state then there is an open neighborhood $W$ of $\ket{\psi}$ such that the neighborhood does not contain a Fock state. Therefore, $\mu_\infty(W) = 0$ otherwise $\bbE_{\mu_\infty}[K] < 0$ contradicting~\eqref{eqn:lemSupCont}. Therefore $\ket{\psi}\notin\supp{\mu_\infty}$. 
\end{proof}

\begin{lemma}\label{lem:probFock} Given the assumptions of Theorem~\ref{the:mainRes}, for any Fock state $\ket{m}$, $m\neq \bar{n}$ and for all $\kappa, C> 0$, there exist constants $\bar{\alpha}$ and $\delta > 0$ and a neighborhood $\cV$ of $\ket{m}$ such that if $\bbE_\mu[V] \leq C$ then $\mu_\infty[\cV] < \kappa$ for all $\mu_\infty\in \Omega$. 
\end{lemma}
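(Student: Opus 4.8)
\noindent\textit{Proof idea.}
The plan is to show that a limit measure $\mu_\infty$ can put appreciable mass near $\ket{m}$ only on the region where the (super-martingale) Lyapunov function $V$ is large, while that region carries vanishing mass along the chain; on the rest of a neighbourhood of $\ket m$ the chain is pushed away by Corollary~\ref{cor:minStep}, which forces a fixed strict decrease of $V$ on every visit.

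Fix $m\neq\bar n$ and $\kappa,C>0$. First I pick $\kappa'\in(0,\kappa)$ with $C':=C/\kappa'>\sigma_m+\tfrac12$; since $V(\ket m)=\sigma_m+\delta\bigl(\cos^4\phi_{\bar n}+\sin^4\phi_{\bar n}-\cos^4\phi_m-\sin^4\phi_m\bigr)\le\sigma_m+\tfrac{\delta}{2}\le\sigma_m+\tfrac12$ once $\delta\le1$, we will have $C'>V(\ket m)$. Exactly as in the proof of Lemma~\ref{lem:convSub} (the super-martingale property of $V(\ket{\psi_k})$ together with Markov's inequality),
\begin{equation}\label{eqn:probFockTail}
\Gamma_n(\mu)\bigl(\{\ket\psi:V(\ket\psi)>C'\}\bigr)\le\frac{\bbE_{\Gamma_n(\mu)}[V]}{C'}\le\frac{\bbE_\mu[V]}{C'}\le\kappa'
\qquad\text{for all }n\text{ and all }\mu\text{ with }\bbE_\mu[V]\le C .
\end{equation}
Now I apply Corollary~\ref{cor:minStep} to this $m$ with $C'$ playing the role of $C$: it yields constants $\bar\alpha,\delta>0$ (which I also take $\le1$), a radius $\epsilon>0$ and $c>0$ such that, whenever $\ket{\psi_k}\in\cW_\epsilon:=B'_\epsilon(\ket m)\cap\{\,C'\ge V>V(\ket m)-\epsilon\,\}$, one has $V(\ket{\psi_{k+1}})-V(\ket{\psi_k})<-c$ with probability one. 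By the lower semi-continuity of $V$ at $\ket m$ (Lemma~\ref{lem:VlowSem}) I may choose $\epsilon''\in(0,\epsilon]$ with $V(\ket\psi)>V(\ket m)-\epsilon$ for every $\ket\psi\in B'_{\epsilon''}(\ket m)$; then the open set $\cV:=B'_{\epsilon''}(\ket m)$ satisfies $\cV\cap\{V\le C'\}\subseteq\cW_\epsilon$, hence $\cV\subseteq\cW_\epsilon\cup\{V>C'\}$ (both $\cW_\epsilon$ and $\{V>C'\}$ are Borel since $V$ is lower semi-continuous).

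The key step is that $\Gamma_n(\mu)(\cW_\epsilon)\to0$. Set $v_n:=\bbE_{\Gamma_n(\mu)}[V]$. By Lemma~\ref{lem:supMart} the sequence $(v_n)$ is non-increasing and bounded below by $0$, so $v_n-v_{n+1}\to0$. On the other hand, conditioning on $\ket{\psi_n}$ and using that the conditional drift $\bbE[V(\ket{\psi_{n+1}})\mid\ket{\psi_n}=\ket\psi]-V(\ket\psi)$ of $V$ is $\le0$ everywhere (Lemma~\ref{lem:supMart}) and $\le-c$ on $\cW_\epsilon$ (Corollary~\ref{cor:minStep}, upon taking the conditional expectation of the a.s.\ inequality), I get
\begin{equation*}
v_n-v_{n+1}=\bbE\Bigl[\,V(\ket{\psi_n})-\bbE\bigl[V(\ket{\psi_{n+1}})\,\big|\,\ket{\psi_n}\bigr]\,\Bigr]\ \ge\ c\,\Gamma_n(\mu)(\cW_\epsilon),
\end{equation*}
so that $\Gamma_n(\mu)(\cW_\epsilon)\le(v_n-v_{n+1})/c\to0$. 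Combining this with $\cV\subseteq\cW_\epsilon\cup\{V>C'\}$ and~\eqref{eqn:probFockTail} gives $\limsup_{n\to\infty}\Gamma_n(\mu)(\cV)\le\kappa'$. Finally, $\cV$ is open in $\bar B_1$ and every $\mu_\infty\in\Omega$ is a weak-$\ast$ limit of some subsequence $\Gamma_{n_l}(\mu)$; by the portmanteau theorem, $\mu_\infty(\cV)\le\liminf_{l}\Gamma_{n_l}(\mu)(\cV)\le\kappa'<\kappa$, which is the claim, the required $\bar\alpha,\delta$ and the neighbourhood $\cV$ being those exhibited above.

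I expect the only delicate point to be the need for the tail bound~\eqref{eqn:probFockTail}: because $V$ is merely lower semi-continuous it may be very large — indeed infinite — at points arbitrarily close to $\ket m$, so the drift estimate of Corollary~\ref{cor:minStep} controls by itself only the sublevel part $\cV\cap\{V\le C'\}$ of the neighbourhood, and the remaining mass has to be absorbed by Markov's inequality. This is also the reason the conclusion is $\mu_\infty(\cV)<\kappa$ rather than $\mu_\infty(\cV)=0$. Everything else is a routine assembly of the super-martingale property, Corollary~\ref{cor:minStep}, and weak-$\ast$ lower semi-continuity of mass on open sets.
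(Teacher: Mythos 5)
Your proof is correct, and its overall skeleton (split a neighbourhood of $\ket{m}$ into a high-$V$ tail controlled by Markov/Doob and a bounded-$V$ band controlled by Corollary~\ref{cor:minStep}, then pass to $\mu_\infty$ via lower semi-continuity of mass on open sets) is the same as the paper's. The genuine difference is in the key step showing that the band carries vanishing mass along the chain. The paper argues trajectorially: starting in $\bar{\cV}_\nu$ the process must exit within at most $\lceil(C/(\kappa/2)-\epsilon)/c\rceil$ steps because $V$ drops by $c$ per step there, and then a supermartingale/Doob estimate is invoked to rule out infinitely many returns, giving $\lim_p[\Gamma_p(\mu)](\bar{\cV}_\nu)=0$. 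You instead use a dissipation (occupation-measure) argument: since $v_n=\bbE_{\Gamma_n(\mu)}[V]$ is non-increasing and bounded, $v_n-v_{n+1}\to 0$, while conditioning gives $v_n-v_{n+1}\ge c\,\Gamma_n(\mu)(\cW_\epsilon)$, so $\Gamma_n(\mu)(\cW_\epsilon)\to 0$ directly (indeed $\sum_n\Gamma_n(\mu)(\cW_\epsilon)<\infty$). Your route is more elementary and avoids the paper's somewhat delicate "probability of infinitely many jumps" estimate, replacing it by a one-line telescoping bound; the paper's route gives, in exchange, a quantitative exit-time bound for trajectories in the band. Two smaller variations, both fine: you take $\cV$ to be a plain ball $B'_{\epsilon''}(\ket m)$ and use lower semi-continuity of $V$ at $\ket m$ to force $V>V(\ket m)-\epsilon$ on it (the paper builds the level condition into the open set $\cV_\nu$ itself), and you pre-fix $C'=C/\kappa'>\sigma_m+\tfrac12$ so that $C'>V(\ket m)$ uniformly in $\delta\le 1$, which neatly sidesteps the circularity of $V(\ket m)$ depending on the yet-to-be-chosen $\delta$.
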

\begin{proof} 
Suppose $\Gamma_{p_q}(\mu)$ is some subsequence of $\Gamma_{p}(\mu)$ that converges to $\mu_\infty\in \Omega$. 

Given any $\nu > 0$, the lower semi-continuity of $V$ implies that the set
\begin{equation*}
\cV_\nu \triangleq B'_\nu(\ket{m}) \cap \{\ket{\psi}\in \bar{B}_1:V(\ket{\psi}) > V(\ket{m}) - \nu\}
\end{equation*}
is an open neighborhood of $\ket{m}$ in $\bar{B}_1$. Because $\cV_\nu$ is open, we know from the (weak-$\ast$) convergence of $\Gamma_{p_q}(\mu) \to \mu_\infty$ that
\begin{equation*}
\liminf_{q\to\infty} [\Gamma_{p_q}(\mu)](\cV_\nu) \geq \mu_\infty(\cV_\nu).
\end{equation*} 
Therefore, in order to prove the Lemma, we need to show that for some $\nu > 0$, 
\begin{equation*}
\lim_{q\to\infty}\Gamma_{p_q}(\cV_\nu) \leq  \kappa.
\end{equation*}
Because $\bbE_\mu[V] \leq C$ we know from Doob's inequality~\ref{eqn:doob} that for all $p$ and all $\nu > 0$,
\begin{equation}\label{eqn:fockLem1}
[\Gamma_p(\mu)]\left(\left\{\ket{\psi}\in \cV_\nu\subset\bar{B}_1: V(\ket{\psi}) > \frac{C}{\kappa/2}\right\}\right) < \frac{\kappa}{2}.
\end{equation}
We show that there exists a $\nu > 0$ such that $\lim_{p\to \infty}  [\Gamma_p(\mu)](\bar{\cV}_\nu) = 0$, where 
\begin{equation}\label{eqn:fockLem2}
\bar{\cV}_\nu \triangleq\left(\left\{\ket{\psi}\in \cV_\nu\subset\bar{B}_1: V(\ket{\psi})\leq \frac{C}{\kappa/2}\right\}\right).
\end{equation}

We know from Corollary~\ref{cor:minStep} that there exists an $\epsilon> 0$ and $c > 0$ such that 
if 
\begin{equation*}
\ket{\psi_p} \in \cW_\epsilon \triangleq B'_\epsilon(\ket{m})\cap \left\{\ket{\psi}:\frac{C}{\kappa/2} \geq V(\ket{\psi}) > V(\ket{m}) - \epsilon\right\}
\end{equation*}
then
\begin{equation*}
V(\ket{\psi_{p+1}}) - V(\ket{\psi_p}) < -c,\textrm{ with probability 1.}
\end{equation*}
Therefore if $\ket{\psi_p}\in \cW_\epsilon$, then for some finite $P$ satisfying $p< P \leq \left\lceil \left(\frac{C}{\kappa/2} - \epsilon\right)/c\right\rceil$, we have
\begin{equation*}
\ket{\psi_P}\in \{\ket{\psi}:V(\ket{\psi}) \leq \cV(\ket{m}) - \epsilon\}\textrm{ with probability $1$.}
\end{equation*}
We choose $\nu = \epsilon/2$. Then if $\ket{\psi_p} \in \bar{\cV}_\nu$ then within a finite number of steps less than $\left\lceil \left(\frac{C}{\kappa/2} - \epsilon\right)/c\right\rceil$ the system state is in the set $\{\ket{\psi}:V(\ket{\psi}) \leq \cV(\ket{m}) - \epsilon\}$. But because $\{\ket{\psi}:V(\ket{\psi}) \leq V(\ket{m}) - \epsilon\} \cap\bar{\cV}_\nu$ is empty, we know that if $\ket{\psi_p}\in \bar{\cV}_\nu$ then the process is outside $\bar{\cV}_\nu$ within a finite number of steps less than $\left\lceil \left(\frac{C}{\kappa/2} - \epsilon\right)/c\right\rceil$.

So $\lim_{p\to \infty} [\Gamma_p(\mu)](\bar{\cV}_\nu) \neq 0$ if and only if the Markov process jumps back and forth between the sets $\{\ket{\psi}:V(\ket{\psi}) \leq \cV(\ket{m}) - \epsilon\}$ and $\bar{\cV}_\nu$ infinitely often. But the supermartigale property of $V(\ket{\psi_p})$ and Doob's inequality~\ref{eqn:doob} implies  
\begin{align}
\mathrm{prob} &\left[V(\ket{\psi_p}) < V(\ket{m}) - \epsilon \textrm{ and } \inf_{p'> p} V(\ket{\psi_p'}) \geq V(\ket{m}) - \frac{\epsilon}{2} \right]\nonumber & \\ &<1 - \frac{V(\ket{m}) - \epsilon}{V(\ket{m}) - \epsilon/2} < 1.\nonumber
\end{align}
As the probability of a single jump is less than $1$, the probability of infinitely many jumps is zero. Therefore $\lim_{p\to \infty} [\Gamma_p(\mu)](\bar{\cV}_\nu) = 0$ and from Equations~\eqref{eqn:fockLem1} and~\eqref{eqn:fockLem2} we get 
\begin{equation*}
\mu_\infty(\cV_\nu) \leq \liminf_{q\to \infty}[\Gamma_{p_q}(\mu)](\cV_\nu) \leq \kappa.
\end{equation*}
\end{proof}
We now finally prove Theorem~\ref{the:mainRes}.
\begin{proof}[Proof of Theorem~\ref{the:mainRes}]
Let $\epsilon > 0$ and $C > 0$ be given. 

We know from Lemma~\ref{lem:supp} that the support set of $\mu_\infty$ only consists of the Fock states $\ket{m}$. Because $\sigma_n \to \infty$, there exists an $M$ such that $\sigma_M > C/(\epsilon/2) + \delta$. Because for all $m > M$, $\cV(\ket{m}) > \sigma_m-\delta$, the supermartingale property of $V(\ket{\psi_k})$ and Doob's inequality~\eqref{eqn:doob} implies for all $k\in \bbZ_0^+$,
\begin{equation*}
\Gamma_k(\mu)(\{\ket{m}:m\geq M\}) < \frac{\epsilon}{2}.
\end{equation*}

If we set $\kappa = \epsilon/2M$ in Lemma~\ref{lem:probFock}, then we know there exist constants $\bar{\alpha}> 0$ and $\delta> 0$ and neighborhoods $\cV(\ket{m})$ of $\ket{m}$ for $0\leq m < M, m\neq \bar{n}$, such that 
\begin{equation*}
\mu_\infty(\{\ket{m}\}) < \frac{\epsilon}{2M}.
\end{equation*}
Therefore, $\mu_\infty(\ket{\bar{n}}) = 1 - \mu_\infty(\{\ket{m}:m\neq \bar{n}\}) \geq 1- \epsilon$.
\end{proof}

\begin{remark}\label{rem:weakAss}
In Lemma~\ref{lem:supp} we show that the only vectors in the support of $\mu_\infty$ are those corresponding to eigenvector of $M_s$. We then used assumption $A1$ in the proof of the Lemma to claim that the only eigenvectors of $M_s$ are the Fock states. We can however weaken this assumption to the following: for some large $M$ such that $\sigma_M > 2C/\epsilon$, eigenvalues corresponding to eigenvectors $\ket{m}$, $m< M$ are non-degenerate. This is because, we can show that if some eigenvector $\ket{\psi}$ is in the span of the set $\{\ket{M},\ket{M+1},\ldots\}$ then using the same argument as that used for $\ket{m}, m> M$, we can show that the probability of $\ket{\psi}$ is small. This is significant for cases where $M_g$ is a more complicated non-linear function of $N$, as is the case in a practical system.
\end{remark}

\begin{remark} In this paper we prove (weak-$\ast$) convergence of $\Gamma_k(\mu)$ to some $\mu_\infty$. The quantum expectation value of an observable (self-adjoint linear operators) $T:\cH\to \cH$ is some state $\ket{\psi}$, defined $\iprod{T}_{\ket{\psi}} \triangleq \iprod{\psi|T\psi}$ is a continuous function of $\ket{\psi}$ if $T$ is bounded. Therefore, if we think of $\iprod{T}:{\ket{\psi}}\mapsto \iprod{T}_{\ket{\psi}}$ as a random variable on the state space $\bar{B}_1$, then by the  (weak-$\ast$) convergence of $\Gamma_{k_l}(\mu)$ to $\mu_\infty$, we have
\begin{equation*}
\lim_{l\to\infty}\bbE_{\Gamma_{k_l}(\mu)}[\iprod{T}] = \bbE_{\mu_\infty}[ \iprod{T}].
\end{equation*}
In particular if $T = \kebr{\bar{n}}$ then 
\begin{equation*}
\lim_{l\to\infty}\bbE_{\Gamma_{k_l}(\mu)}[\iprod{\kebr{\bar{n}}}] = \bbE_{\mu_\infty}[ \iprod{\kebr{\bar{n}}}]\geq 1-\epsilon,
\end{equation*}
where $\epsilon$ is as in Theorem~\ref{the:mainRes}. But $\iprod{\kebr{\bar{n}}}$ is precisely the (quantum) probability that the system is in state $\ket{\bar{n}}$. Therefore, the probability that the quantum system is in the state $\ket{\bar{n}}$ maybe made arbitrarily close to $1$. 

Similar statements maybe made about the standard deviation of bounded observables $T:\cH\to\cH$.
\end{remark}

Simulations have been performed with the controller in Theorem~\ref{the:mainRes} by truncating the controller using a Galerkin approximation. The simulations indicate that the controller designed using the infinite dimensional Hilbert space provides performance improvements (of about 4-5\%) in the probability of convergence to the target state when compared to the controller designed using the finite dimensional approximation~\cite{Somaraju2011}. Moreover as shown in Theorem~\ref{the:mainRes} the feedback parameters $\bar{\alpha}$ and $\delta$ maybe chosen to increase the probability of convergence to the target state.
\section{Conclusion} In this paper we examine the semi-global, approximate stabilization of the Markov process in Equations~\eqref{eqn:rhohalf} and~\eqref{eqn:rhoone} at a photon-number target state. The Markov process is defined on the set of all unit vectors in the infinite dimensional Hilbert space $\cH$. The non-compactness of this set of unit vectors dictates the use of a special Lyapunov function~\eqref{eqn:lyap} to show the following in Theorem~\ref{the:mainRes} - provided the initial measures $\mu$ satisfies certain initial conditions, for all $\epsilon > 0$ we can choose feedback such that with probability greater than $1-\epsilon$, the Markov process converges to the target Fock state. 
\section{Acknowledgments}
The authors thank M. Brune, I.~Dotsenko, S.~Haroche and J.M. Raimond  for enlightening discussions and advices. 
\bibliographystyle{model1a-num-names}
\bibliography{ref}
\end{document}